\newtheorem{theorem}{Theorem}
\theoremstyle{plain}
\newtheorem{acknowledgement}{Acknowledgement}
\newtheorem{corollary}{Corollary}
\newtheorem{definition}{Definition}
\newtheorem{example}{Example}
\newtheorem{lemma}{Lemma}
\newtheorem{proposition}{Proposition}
\newtheorem{remark}{Remark}
\numberwithin{equation}{section}
\begin{document}
\title[On the Jensen functional and superquadraticity]{On the Jensen
functional and superquadraticity}
\author{Flavia-Corina Mitroi-Symeonidis}
\address{LUMINA - University of South-East Europe, Faculty of Engineering
Sciences, \c{S}os. Colentina 64b, Sector 2, RO-021187, Bucharest, Romania }
\email{fcmitroi@yahoo.com}
\author{Nicu\c{s}or Minculete}
\address{Transilvania University of Bra\c{s}ov, Iuliu Maniu Street, No. 50,
500091, Bra\c{s}ov, Romania}
\email{minculeten@yahoo.com}
\date{}
\subjclass[2000]{Primary 26B25; Secondary 26D15}
\keywords{Jensen functional, Chebychev functional, superquadratic function}

\begin{abstract}
In this note we give a recipe which describes upper and lower bounds for the
Jensen functional under superquadraticity conditions. Some results involve
the Chebychev functional. We give a more general definition of these
functionals and establish the analogue results.
\end{abstract}

\maketitle

\section{Introduction}

The object of this paper is to derive some results related to the Jensen
functional\emph{\ }in the framework of superquadratic functions. We are
interested in finding bounds both for the discrete and continuous case.

For the reader's convenience, let us briefly state known facts regarding the
principal tools, the superquadraticity and the Jensen functional. See S.
Abramovich and S. S. Dragomir \cite{abr09} for details and proofs.

\begin{definition}[{\protect\cite[Definition 2.1]{abr04}}]
A function $f$ defined on an interval $I=[0,a]$ or $[0,\infty )$ is \emph{%
superquadratic} if for each $x$ in $I$ there exists a real number $C(x)$
such that 
\begin{equation}
f(y)-f(x)\geq f(|y-x|)+C(x)(y-x)  \label{SQ}
\end{equation}%
for all $y\in I$.
\end{definition}

We say that $f$ is a \emph{subquadratic function }if $-f$ is superquadratic.
The set of superquadratic functions is closed under addition and positive
scalar multiplication.

\begin{example}[\protect\cite{abrPP}]
The function $f\left( x\right) =x^{p},$ $p\geq 2\ $is superquadratic with $%
C(x)=f^{\prime }\left( x\right) =px^{p-1}.$ Similarly, $g\left( x\right)
=-\left( 1+x^{1/p}\right) ^{p},$ $p>0\ $is superquadratic with $C(x)=0.$
Also $h(x)=x^{2}\log x$ with $C(x)=h^{\prime }(x)=x(2\log x+1)$ is a
superquadratic function (but not monotonic and not convex).
\end{example}

\begin{example}[\protect\cite{Wal14}]
Some elementary functions are not superquadratic, such as $f\left( x\right)
=x$ and $f\left( x\right) =\exp x.$
\end{example}

\begin{lemma}[{\protect\cite[Lemma 2.2]{abr04}}]
Let $f$ be a superquadratic function with $C(x)$ defined as above.

(i) Then $f(0)$ $\leq 0$.

(ii) If $f(0)=f^{\prime }(0)=0$, then $C(x)=f^{\prime }(x),$ whenever $f$ is
differentiable at $x>0$.

(iii) If $f\geq 0$, then $f$ is convex and $f(0)=f^{\prime }(0)=0.$
\end{lemma}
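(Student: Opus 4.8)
The plan is to derive each conclusion by substituting well-chosen values of $y$ (and sometimes $x$) into the defining inequality \eqref{SQ}, and, wherever a derivative appears, by passing to one-sided limits of the resulting difference quotients. Part (i) is immediate: setting $x=y=0$ in \eqref{SQ} makes $|y-x|=0$ and $y-x=0$, so the inequality collapses to $f(0)-f(0)\geq f(0)$, that is, $0\geq f(0)$, which is exactly $f(0)\leq 0$.

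For (ii), fix $x>0$ at which $f$ is differentiable and introduce
\[
\phi(y)=f(y)-f(x)-C(x)(y-x)-f(|y-x|),
\]
which is nonnegative on $I$ by \eqref{SQ} and vanishes at $y=x$ since $f(0)=0$; thus $y=x$ is a global minimizer of $\phi$. The idea is to divide $\phi(y)$ by $(y-x)$ and let $y\to x$ from each side. Using $f(0)=0$ and the hypothesis $f'(0)=0$, the quotient $f(|y-x|)/|y-x|=f(t)/t$ (with $t=|y-x|\to 0^{+}$) tends to $f'(0)=0$, while $\bigl(f(y)-f(x)\bigr)/(y-x)\to f'(x)$. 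Letting $y\to x^{+}$, where $y-x>0$ forces $\phi(y)/(y-x)\geq 0$, gives $f'(x)-C(x)\geq 0$; letting $y\to x^{-}$, where $y-x<0$ flips the sign so that $\phi(y)/(y-x)\leq 0$, gives $f'(x)-C(x)\leq 0$. Together these yield $C(x)=f'(x)$.

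For (iii), I would first combine (i) with the assumption $f\geq 0$ to conclude $f(0)=0$. Next, discarding the nonnegative term $f(|y-x|)$ from \eqref{SQ} leaves $f(y)\geq f(x)+C(x)(y-x)$ for all $x,y\in I$, so $f$ possesses a supporting line of slope $C(x)$ at every point; by the standard support-line characterization this makes $f$ convex. It then remains to show $f'(0)=0$. On one side, since $f\geq 0$ and $f(0)=0$ the point $0$ is a global minimum, whence the right derivative satisfies $f'(0)=\lim_{y\to 0^{+}}f(y)/y\geq 0$. On the other side, I would rerun the two one-sided limit computations of (ii) at a differentiable point $x>0$, but this time retaining the term $f'(0)$ rather than setting it to zero; this produces the two-sided bound $f'(x)+f'(0)\leq C(x)\leq f'(x)-f'(0)$, which is consistent only if $f'(0)\leq 0$. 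The two inequalities force $f'(0)=0$.

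The substitutions and the support-line argument are routine; the delicate step is the limit passage in (ii) and (iii). There one must correctly track the sign of $y-x$, which reverses the inequality upon division, and one must justify that $f(|y-x|)/|y-x|\to f'(0)$ — precisely the place where $f(0)=0$ and the existence of $f'(0)$ enter. This limit, rather than any algebraic manipulation, is the main obstacle to handle rigorously.
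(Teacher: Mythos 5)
Your proof is correct, but there is nothing in the paper to compare it against: this lemma is imported background, quoted from \cite[Lemma 2.2]{abr04}, and the paper gives no proof of it (the introduction explicitly defers to the cited sources). Judged on its own, your argument is the standard one for this result and all three parts go through: (i) is exactly the substitution $x=y=0$ in (\ref{SQ}); (ii) is the correct one-sided difference-quotient argument, where the sign flip upon dividing by $y-x$ and the limit $f(t)/t\to f^{\prime }(0)=0$ (valid because $f(0)=0$ and $f^{\prime }(0)$ exists) yield $f^{\prime }(x)-C(x)\geq 0$ and $\leq 0$ respectively; and (iii) correctly combines $f(0)=0$, the support-line characterization of convexity obtained by discarding the nonnegative term $f(|y-x|)$, and the two-sided bound $f^{\prime }(x)+f^{\prime }(0)\leq C(x)\leq f^{\prime }(x)-f^{\prime }(0)$ to force $f^{\prime }(0)=0$. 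Two points should be made explicit to make the write-up airtight. First, in (iii) the existence of the objects you use is not free: the limit $f^{\prime }(0)=\lim_{y\to 0^{+}}f(y)/y$ exists because the difference quotients of a convex function are monotone, and a point $x>0$ of differentiability exists because a convex function on an interval is differentiable off a countable set; both facts rely on the convexity you establish just before, so the order of the steps matters and should be stated. Second, the two-sided limit in (ii) needs $x$ to be an interior point of $I$ (and similarly for the point $x$ chosen in (iii)); at the right endpoint of $[0,a]$ only the left-hand inequality $C(x)\geq f^{\prime }(x)$ is available, and indeed there $C(x)$ need not be unique --- a caveat the original statement glosses over as well.
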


\begin{theorem}[{\protect\cite[Theorem 2.3]{abr04}}]
\label{th_integral}The inequality 
\begin{equation}
f\left( \int \varphi \mathrm{d}\mu \right) \leq \int f\left( \varphi \left(
s\right) \right) -f\left( \left\vert \varphi \left( s\right) -\int \varphi 
\mathrm{d}\mu \right\vert \right) \mathrm{d}\mu \left( s\right)
\end{equation}%
holds for all probability measures $\mu $ and all nonnegative, $\mu $%
-integrable functions $\varphi $ if and only if $f$ is superquadratic.
\end{theorem}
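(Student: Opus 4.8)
The plan is to establish the two implications separately. Sufficiency will be a one-line integration of the defining inequality, and necessity will come from applying the hypothesis to two-point measures and extracting a supporting line.

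For sufficiency, suppose $f$ is superquadratic and fix a probability measure $\mu$ together with a nonnegative $\mu$-integrable $\varphi$; put $\bar\varphi=\int\varphi\,\mathrm{d}\mu\in I$. I would apply \eqref{SQ} with $x=\bar\varphi$ and $y=\varphi(s)$, obtaining $f(\varphi(s))-f(\bar\varphi)\ge f(|\varphi(s)-\bar\varphi|)+C(\bar\varphi)(\varphi(s)-\bar\varphi)$, and then integrate in $s$ against $\mathrm{d}\mu$. Since $\mu$ is a probability measure, $\int f(\bar\varphi)\,\mathrm{d}\mu=f(\bar\varphi)$ and $\int(\varphi(s)-\bar\varphi)\,\mathrm{d}\mu(s)=0$, so the linear term disappears and the claimed inequality drops out. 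The only thing to verify here is that $s\mapsto f(|\varphi(s)-\bar\varphi|)$ is integrable, which follows from the bound \eqref{SQ} provides.

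For necessity, assume the displayed inequality holds for all admissible $\mu,\varphi$. Testing it with $\varphi\equiv 0$ already gives $f(0)\le 0$. Next I would fix $x$ in the interior of $I$ and, for any $y,z\in I$ and $\lambda\in(0,1)$ with $x=\lambda y+(1-\lambda)z$, choose $\varphi$ taking the values $y$ and $z$ on sets of $\mu$-measure $\lambda$ and $1-\lambda$. Writing $\psi(t):=f(t)-f(|t-x|)$, the hypothesis becomes $f(x)\le\lambda\psi(y)+(1-\lambda)\psi(z)$. Ranging over all such two-point representations of $x$ says exactly that $f(x)$ does not exceed the convex envelope $\widetilde\psi$ of $\psi$ at $x$, because in one variable $\widetilde\psi(x)=\inf\{\lambda\psi(y)+(1-\lambda)\psi(z):\lambda y+(1-\lambda)z=x\}$. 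Hence $\widetilde\psi(x)\ge f(x)$, and since $\psi$ is finite on $I$ while $\widetilde\psi(x)>-\infty$, the minorant $\widetilde\psi$ is finite on $\mathrm{int}\,I$ and admits a subgradient $C(x)$ at the interior point $x$. Then $\psi(y)\ge\widetilde\psi(y)\ge\widetilde\psi(x)+C(x)(y-x)\ge f(x)+C(x)(y-x)$ for every $y\in I$, which is precisely \eqref{SQ}. The endpoint $x=0$ is immediate ($\psi\equiv0$, so $C(0)=0$ works via $f(0)\le0$), and a right endpoint of a bounded $I$ I would handle by a one-sided limiting argument.

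The hard part is the necessity direction, and specifically producing a genuine constant $C(x)$ --- a supporting line --- rather than a mere limiting slope. Two points demand care: first, checking that the nonnegativity restriction on $\varphi$ still leaves enough two-point representations of each interior $x$, which holds because $I$ is convex and contains $0$; and second, ensuring the convex envelope is proper so that a finite subgradient exists. I expect the cleanest route is exactly this convex-envelope argument, since a naive $\lambda\to 0^{+}$ differentiation would force $C(x)=f'(x)$ and thereby demand a differentiability of $f$ that the hypotheses do not supply.
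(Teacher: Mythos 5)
First, a point of comparison that matters: the paper never proves this statement at all — it is quoted, with citation, as \cite[Theorem 2.3]{abr04}, so your proposal can only be measured against the source's argument and against correctness. Your sufficiency half is exactly the standard one (apply (\ref{SQ}) at $x=\int \varphi \,\mathrm{d}\mu $, $y=\varphi (s)$, integrate, and let the linear term vanish because $\mu $ is a probability measure), and it is fine. Your necessity argument at interior points is also correct, and is in substance the classical two-point-measure argument in convex-analytic clothing: writing $\psi _{x}(t)=f(t)-f(|t-x|)$, the tested inequality for $y<x<z$ with $x=\lambda y+(1-\lambda )z$ rearranges to
\[
\frac{\psi _{x}(y)-f(x)}{y-x}\leq \frac{\psi _{x}(z)-f(x)}{z-x},
\]
and any number between the supremum of the left sides and the infimum of the right sides serves as $C(x)$; your subgradient of the convex envelope is precisely such a separating value. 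The envelope language is a clean way to get a genuine supporting constant without any differentiability, as you intended, and your treatment of $x=0$ (where $\psi _{0}\equiv 0$ and $C(0)=0$ works because $f(0)\leq 0$) is correct.

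The genuine gap is the right endpoint of a bounded interval $I=[0,a]$, and it is worse than a missing detail: no ``one-sided limiting argument'' can close it, because the hypothesis never sees $a$ as a barycenter in a nondegenerate way. If $0\leq \varphi \leq a$ and $\int \varphi \,\mathrm{d}\mu =a$, then $\varphi =a$ $\mu $-a.e., so every test of the inequality at the mean $a$ is vacuous. Concretely, let $f(t)=t^{2}$ for $0\leq t<a$ and $f(a)=a^{2}+c$ with $c>0$. Since $t^{2}$ satisfies the integral inequality with equality, the jump at $a$ changes the right-hand side by $c\,\mu (\varphi =a)-c\,\mu (|\varphi -m|=a)$ and the left-hand side by $c$ only when $m=a$; one checks that $\mu (|\varphi -m|=a)=0$ in every case (for $m\in (0,a)$ the set is empty, and for $m\in \{0,a\}$ the function $\varphi $ is a.e.\ constant), while $m=a$ forces $\mu (\varphi =a)=1$. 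Hence this $f$ satisfies the integral inequality for \emph{all} $\mu $ and $\varphi $. Yet (\ref{SQ}) at $x=a$ with $y=a-h$ reads $(a-h)^{2}-(a^{2}+c)\geq h^{2}-C(a)h$, i.e.\ $C(a)\geq 2a+c/h$, so no finite $C(a)$ exists and $f$ is not superquadratic. So your argument proves the theorem for $I=[0,\infty )$, and for $I=[0,a]$ it proves (\ref{SQ}) at every point of $[0,a)$ — which is in fact all that the hypothesis yields; the endpoint case you deferred is not merely unproven but unprovable as the statement is transcribed (it would require excluding $x=a$ or imposing extra regularity there, e.g.\ continuity at $a$). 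You correctly isolated the endpoint as the delicate point, but the claim that it ``would be handled'' by a limit is the one step that fails.
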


\begin{definition}[\protect\cite{abr09}]
\label{main}Let $f$ be a\ real valued function defined on an interval $I$, $%
x_{1},...,x_{n}\in I$ and $p_{1},...,p_{n}\in \left( 0,1\right) $ such that $%
\sum_{i=1}^{n}p_{i}=1$. The \emph{Jensen functional} is defined by 
\begin{equation}
\mathcal{J}\left( f,\mathbf{p},\mathbf{x}\right) =\sum_{i=1}^{n}p_{i}f\left(
x_{i}\right) -f\left( \sum_{i=1}^{n}p_{i}x_{i}\right)
\end{equation}%
and the \emph{Chebychev functional} is defined by 
\begin{equation}
\mathcal{T}\left( f,\mathbf{p},\mathbf{x}\right) =\sum_{i=1}^{n}p_{i}\left(
x_{i}-\sum_{j=1}^{n}p_{j}x_{j}\right) f\left( x_{i}\right) .
\end{equation}
\end{definition}

See \cite{dra06} and \cite{nic99}.

The discrete form of Theorem \ref{th_integral} is as follows.

\begin{proposition}[{\protect\cite[Lemma 2]{abr09}}]
\label{6}Let $x_{i}\geq 0,$ $i=1,...,n,$ and $p_{i}>0,$ $i=1,...,n,$ with $%
\sum_{i=1}^{n}p_{i}=1.$ If $f$ is superquadratic, then 
\begin{equation*}
\mathcal{J}\left( f,\mathbf{p},\mathbf{x}\right) \geq
\sum_{i=1}^{n}p_{i}f\left( \left\vert
x_{i}-\sum_{j=1}^{n}p_{j}x_{j}\right\vert \right) .
\end{equation*}
\end{proposition}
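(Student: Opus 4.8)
The plan is to derive the bound directly from the defining inequality \eqref{SQ} of superquadraticity, evaluated at the weighted barycenter, and then to average against the weights $p_{i}$. Write $m=\sum_{j=1}^{n}p_{j}x_{j}$ for this barycenter. Since each $x_{i}$ lies in the interval $I$ and $m$ is a convex combination of the $x_{i}$, the point $m$ again lies in $I$, so the constant $C(m)$ furnished by the definition is available.

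First I would specialize \eqref{SQ} to $x=m$ and $y=x_{i}$, one inequality for each index $i$, which reads
\[
f(x_{i})-f(m)\geq f\left( \left\vert x_{i}-m\right\vert \right) +C(m)\left( x_{i}-m\right) .
\]
Next I would multiply the $i$-th inequality by the positive weight $p_{i}$ and sum over $i$. On the left, the normalization $\sum_{i=1}^{n}p_{i}=1$ collapses the sum to $\sum_{i=1}^{n}p_{i}f(x_{i})-f(m)$, which is precisely $\mathcal{J}\left( f,\mathbf{p},\mathbf{x}\right) $. The decisive simplification is that the first-order term vanishes: by the very definition of $m$,
\[
\sum_{i=1}^{n}p_{i}\,C(m)\left( x_{i}-m\right) =C(m)\left( \sum_{i=1}^{n}p_{i}x_{i}-m\right) =0 .
\]
What remains on the right is exactly $\sum_{i=1}^{n}p_{i}f\left( \left\vert x_{i}-m\right\vert \right) $, giving the asserted inequality.

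The calculation is short, so the main obstacle is not algebraic but rather the bookkeeping of domains: I must confirm that every argument to which $f$ is applied belongs to $I$. The barycenter $m$ lies in $I$ by convexity of the interval, and each $\left\vert x_{i}-m\right\vert $ is nonnegative and, in the bounded case $I=[0,a]$, does not exceed $a$, so it too lies in $I$. Once this membership is secured, the result follows purely from the substitution and the cancellation of the linear term, with no appeal to differentiability of $f$ or to the finer structure of $C(x)$.
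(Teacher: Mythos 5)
Your proof is correct and is essentially the argument the paper relies on: Proposition \ref{6} is quoted from \cite{abr09} without a standalone proof, but the paper's proof of inequality (\ref{N}) — substituting $x=\sum_{j=1}^{n}p_{j}x_{j}$ and $y=x_{i}$ into (\ref{SQ}), multiplying by $p_{i}$ and summing so that the $C(x)$-term cancels — is exactly your computation (the paper's version carries an extra parameter $\lambda$, and Proposition \ref{6} is recovered at $\lambda=1$). The domain bookkeeping you note (that $\sum_{j}p_{j}x_{j}$ and each $\left\vert x_{i}-\sum_{j}p_{j}x_{j}\right\vert$ lie in $I$) is the only subtle point, and you handle it correctly.
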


\begin{theorem}[{\protect\cite[Theorem 4]{abr09}}]
\label{5}Assume that $x_{i}\in I,$ $i=1,...,n,$ $p_{i}>0$ are such that $%
\sum_{i=1}^{n}p_{i}=1$ and $r_{i}>0$ are such that $\sum_{i=1}^{n}r_{i}=1.$
We denote 
\begin{equation*}
m=\min_{i=1,...,n}\left\{ \frac{p_{i}}{r_{i}}\right\} ,\text{ }%
M=\max_{i=1,...,n}\left\{ \frac{p_{i}}{r_{i}}\right\} .
\end{equation*}%
Then 
\begin{eqnarray*}
&&\mathcal{J}\left( f,\mathbf{p},\mathbf{x}\right) -m\mathcal{J}\left( f,%
\mathbf{r},\mathbf{x}\right) \\
&\geq &mf\left( \left\vert \sum_{i=1}^{n}\left( r_{i}-p_{i}\right)
x_{i}\right\vert \right) +\sum_{i=1}^{n}\left( p_{i}-mr_{i}\right) f\left(
\left\vert x_{i}-\sum_{j=1}^{n}p_{j}x_{j}\right\vert \right)
\end{eqnarray*}%
and 
\begin{eqnarray*}
&&M\mathcal{J}\left( f,\mathbf{r},\mathbf{x}\right) -\mathcal{J}\left( f,%
\mathbf{p},\mathbf{x}\right) \\
&\geq &f\left( \left\vert \sum_{i=1}^{n}\left( r_{i}-p_{i}\right)
x_{i}\right\vert \right) +\sum_{i=1}^{n}\left( Mr_{i}-p_{i}\right) f\left(
\left\vert x_{i}-\sum_{j=1}^{n}r_{j}x_{j}\right\vert \right)
\end{eqnarray*}%
for all superquadratic functions $f$.
\end{theorem}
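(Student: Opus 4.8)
The plan is to derive both inequalities from the discrete superquadratic Jensen inequality (Proposition \ref{6}) by applying it to two auxiliary probability measures chosen so that their barycenters are the two weighted means $\sum_j p_j x_j$ and $\sum_j r_j x_j$. Throughout, write $\bar{p}=\sum_{j=1}^{n}p_jx_j$ and $\bar{r}=\sum_{j=1}^{n}r_jx_j$; since $f$ is superquadratic its domain lies in $[0,\infty)$, so every $x_i\ge 0$ and both means belong to $I$, which is exactly what Proposition \ref{6} requires.

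For the first (lower) bound, observe that $m=\min_i p_i/r_i$ forces $p_i-mr_i\ge 0$ for every $i$, while $\sum_{i=1}^{n}(p_i-mr_i)+m=(1-m)+m=1$. I would therefore introduce the probability measure assigning weight $p_i-mr_i$ to the point $x_i$ and the remaining mass $m$ to the auxiliary point $\bar{r}$. A short computation gives its barycenter,
\[
\sum_{i=1}^{n}(p_i-mr_i)x_i+m\bar{r}=\bar{p}-m\bar{r}+m\bar{r}=\bar{p},
\]
and, crucially, its Jensen functional collapses onto the left-hand side of the claim:
\[
\sum_{i=1}^{n}(p_i-mr_i)f(x_i)+mf(\bar{r})-f(\bar{p})=\mathcal{J}(f,\mathbf{p},\mathbf{x})-m\mathcal{J}(f,\mathbf{r},\mathbf{x}).
\]
Applying Proposition \ref{6} to this measure bounds the expression below by $\sum_{i=1}^{n}(p_i-mr_i)f(|x_i-\bar{p}|)+mf(|\bar{r}-\bar{p}|)$, and noting $|\bar{r}-\bar{p}|=|\sum_i(r_i-p_i)x_i|$ delivers the first inequality.

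For the second (upper) bound, I would run the dual construction dictated by $M=\max_i p_i/r_i$, which makes $Mr_i-p_i\ge 0$. Here I would place weight $(Mr_i-p_i)/M$ at $x_i$ and weight $1/M$ at $\bar{p}$; these are nonnegative and sum to $(M-1)/M+1/M=1$, and the barycenter works out to $\bar{r}$. Proposition \ref{6} then bounds the Jensen functional of this measure below by $\sum_{i=1}^{n}\frac{Mr_i-p_i}{M}f(|x_i-\bar{r}|)+\frac{1}{M}f(|\bar{p}-\bar{r}|)$; multiplying the whole inequality by $M$ turns the left side into $M\mathcal{J}(f,\mathbf{r},\mathbf{x})-\mathcal{J}(f,\mathbf{p},\mathbf{x})$ and the right side into the asserted bound.

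The routine parts are the two barycenter computations and the algebraic collapse of each Jensen functional onto the stated left-hand sides; these are just rearrangements of the sums $\sum p_i f(x_i)$ and $\sum r_i f(x_i)$. The one point that genuinely uses the hypotheses—and which I would flag as the crux—is the nonnegativity of the weights $p_i-mr_i$ and $Mr_i-p_i$, since Proposition \ref{6} is valid only for an honest probability measure; this nonnegativity is precisely what the definitions of $m$ and $M$ guarantee. Everything else is bookkeeping.
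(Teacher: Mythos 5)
Your proof is correct and is essentially the paper's own argument: the paper establishes this statement as the $k=1$ case of its Theorem~\ref{20}, whose proof uses exactly your decomposition, namely writing $\mathcal{J}(f,\mathbf{p},\mathbf{x})-m\mathcal{J}(f,\mathbf{r},\mathbf{x})$ as the Jensen functional of the auxiliary probability measure with mass $p_i-mr_i$ at $x_i$ and mass $m$ at the $\mathbf{r}$-barycenter, then applying the superquadratic Jensen inequality (Lemma~\ref{1}, the analogue of Proposition~\ref{6}), with the second inequality handled ``likewise'' just as in your dual construction with weights $(Mr_i-p_i)/M$ and $1/M$.
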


\begin{definition}
\label{2}Assume that $\mathbf{x}=\left( x_{1},...,x_{n}\right) \in I^{n},$ $%
\mathbf{p}=\left( p_{1},...,p_{n}\right) \ $are such that $p_{i}>0$, $%
\sum_{i=1}^{n}p_{i}=1,$ $\mathbf{q}=\left( q_{1},...,q_{k}\right) $ are such
that $q_{i}>0$, $\sum_{i=1}^{k}q_{i}=1$ ($1\leq k\leq n$). We define%
\begin{equation*}
\mathcal{J}_{k}\left( f,\mathbf{p},\mathbf{q},\mathbf{x}\right)
:=\sum_{i_{1},...,i_{k}=1}^{n}p_{i_{1}}...p_{i_{k}}f\left(
\sum_{j=1}^{k}q_{j}x_{i_{j}}\right) -f\left( \sum_{i=1}^{n}p_{i}x_{i}\right)
.
\end{equation*}
\end{definition}

Obviously $\mathcal{J}_{1}\left( f,\mathbf{p},\mathbf{q},\mathbf{x}\right) =%
\mathcal{J}\left( f,\mathbf{p},\mathbf{x}\right) .$ We quote now some
results that we refine in the following section.

\begin{proposition}[{\protect\cite[Theorem 6]{abr09}}]
\label{4}Let $x_{i}\geq 0,$ $i=1,...,n,$ and $p_{i}>0,$ $i=1,...,n,$ such
that $\sum_{i=1}^{n}p_{i}=1$ and $q_{i}>0$, $i=1,...,k,$ $%
\sum_{i=1}^{k}q_{i}=1$ ($1\leq k\leq n$)$.$ If $f$ is superquadratic, then 
\begin{equation*}
\mathcal{J}_{k}\left( f,\mathbf{p},\mathbf{q},\mathbf{x}\right) \geq
\sum_{i_{1},...,i_{k}=1}^{n}p_{i_{1}}...p_{i_{k}}f\left( \left\vert
\sum_{j=1}^{k}q_{j}x_{i_{j}}-\sum_{j=1}^{n}p_{j}x_{j}\right\vert \right) .
\end{equation*}
\end{proposition}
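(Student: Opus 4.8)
The plan is to recognize $\mathcal{J}_{k}$ as an ordinary Jensen functional $\mathcal{J}$ over an enlarged index set and then invoke Proposition \ref{6} directly. First I would index the outer sum by multi-indices $\alpha =(i_{1},\ldots ,i_{k})\in \{1,\ldots ,n\}^{k}$, assign to each the weight $P_{\alpha }:=p_{i_{1}}\cdots p_{i_{k}}$, and set $y_{\alpha }:=\sum_{j=1}^{k}q_{j}x_{i_{j}}$. Since each $p_{i}>0$ and $\sum_{i=1}^{n}p_{i}=1$, the numbers $P_{\alpha }$ are positive and satisfy $\sum_{\alpha }P_{\alpha }=\left( \sum_{i=1}^{n}p_{i}\right) ^{k}=1$; moreover each $y_{\alpha }\geq 0$ because the $x_{i}$ are nonnegative and the $q_{j}$ positive. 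Thus $(\mathbf{P},\mathbf{y})$ is an admissible weight-point pair for the Jensen functional.

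The crucial step is the barycenter computation, in which I would show that the weighted mean of the $y_{\alpha }$ equals $\sum_{i=1}^{n}p_{i}x_{i}$. Expanding and interchanging the order of summation,
\begin{equation*}
\sum_{\alpha }P_{\alpha }y_{\alpha }=\sum_{j=1}^{k}q_{j}\sum_{i_{1},\ldots ,i_{k}=1}^{n}p_{i_{1}}\cdots p_{i_{k}}x_{i_{j}}.
\end{equation*}
For each fixed $j$ the inner sum factors: summing $p_{i_{l}}$ over $i_{l}$ for every $l\neq j$ contributes factors of $1$, leaving $\sum_{i_{j}=1}^{n}p_{i_{j}}x_{i_{j}}=\sum_{i=1}^{n}p_{i}x_{i}$. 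Using $\sum_{j=1}^{k}q_{j}=1$, I obtain $\sum_{\alpha }P_{\alpha }y_{\alpha }=\sum_{i=1}^{n}p_{i}x_{i}$. Consequently the term $f\left( \sum_{i=1}^{n}p_{i}x_{i}\right) $ subtracted in Definition \ref{2} is exactly $f\left( \sum_{\alpha }P_{\alpha }y_{\alpha }\right) $, so that $\mathcal{J}_{k}\left( f,\mathbf{p},\mathbf{q},\mathbf{x}\right) =\mathcal{J}\left( f,\mathbf{P},\mathbf{y}\right) $.

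Finally I would apply Proposition \ref{6} to the pair $(\mathbf{P},\mathbf{y})$, which yields
\begin{equation*}
\mathcal{J}\left( f,\mathbf{P},\mathbf{y}\right) \geq \sum_{\alpha }P_{\alpha }f\left( \left\vert y_{\alpha }-\sum_{\beta }P_{\beta }y_{\beta }\right\vert \right) .
\end{equation*}
Substituting back the definitions of $P_{\alpha }$, $y_{\alpha }$ and the barycenter value $\sum_{\beta }P_{\beta }y_{\beta }=\sum_{j=1}^{n}p_{j}x_{j}$ gives precisely the asserted lower bound.

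The only real content is the barycenter identity; once it is in place, the superquadraticity estimate is entirely outsourced to Proposition \ref{6}, and the product structure of the weights $P_{\alpha }$ is what makes the mean collapse to $\sum_{i=1}^{n}p_{i}x_{i}$. I therefore do not expect a genuine obstacle here, since the argument is a change of index rather than a fresh inequality; the one point to handle with care is the bookkeeping that all $l\neq j$ factors truly reduce to $1$ in the multi-index sum.
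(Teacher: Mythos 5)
Your proof is correct, but it runs in the opposite direction from the paper. The paper quotes Proposition \ref{4} from \cite{abr09} and later recovers it as the special case $\mathbf{p}_{1}=\dots =\mathbf{p}_{k}=\mathbf{p}$, $\mathbf{x}_{1}=\dots =\mathbf{x}_{k}=\mathbf{x}$ of its Lemma \ref{1}, whose proof is a direct application of the definition of superquadraticity: one applies (\ref{SQ}) with $x=\bar{x}=\sum_{j=1}^{n}p_{j}x_{j}$ and $y=\sum_{j=1}^{k}q_{j}x_{i_{j}}$ for each multi-index, multiplies by $p_{i_{1}}\cdots p_{i_{k}}$ and sums, the term $C\left( \bar{x}\right) \sum_{\alpha }P_{\alpha }\left( y_{\alpha }-\bar{x}\right) $ vanishing precisely because of the same barycenter identity you establish. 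You instead repackage the multi-index functional as an ordinary Jensen functional on the product index set $\left\{ 1,\dots ,n\right\} ^{k}$ and outsource all the superquadraticity to Proposition \ref{6}. Both arguments hinge on the identity $\sum_{\alpha }P_{\alpha }y_{\alpha }=\sum_{i=1}^{n}p_{i}x_{i}$; the direct route is self-contained, while yours shows that Proposition \ref{4}, which the paper calls ``slightly more general'' than Proposition \ref{6}, is in fact a formal consequence of it --- this is exactly the product-measure reduction that the authors credit to Gord Sinnamon in the introduction. One small point you should make explicit: when $I=\left[ 0,a\right] $ you need each $y_{\alpha }=\sum_{j=1}^{k}q_{j}x_{i_{j}}$ to lie in $I$, not merely to be nonnegative; this holds because $y_{\alpha }$ is a convex combination of points of the interval $I$, and it is what makes the pair $\left( \mathbf{P},\mathbf{y}\right) $ admissible for Proposition \ref{6}.
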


Notice that the Proposition \ref{4} is a slightly more general assertion
than Proposition \ref{6} above.

\begin{theorem}[{\protect\cite[Theorem 7]{abr09}}]
\label{18}Assume that $\mathbf{x}=\left( x_{1},x_{2},...,x_{n}\right) \in
I^{n},$ $\mathbf{p}=\left( p_{1},p_{2},...,p_{n}\right) \ $are such that $%
p_{i}>0$, $\sum_{i=1}^{n}p_{i}=1,$ $\mathbf{q}=\left(
q_{1},q_{2},...,q_{k}\right) $ are such that $q_{i}>0$, $%
\sum_{i=1}^{k}q_{i}=1$ ($1\leq k\leq n$) and $\mathbf{r}=\left(
r_{1},r_{2},...,r_{n}\right) $ are such that $r_{i}>0$, $%
\sum_{i=1}^{n}r_{i}=1.$ We denote%
\begin{equation*}
m=\min_{1\leq i_{1},...,i_{k}\leq n}\left\{ \frac{p_{i_{1}}...p_{i_{k}}}{%
r_{i_{1}}...r_{i_{k}}}\right\} \text{ and }M=\max_{1\leq i_{1},...,i_{k}\leq
n}\left\{ \frac{p_{i_{1}}...p_{i_{k}}}{r_{i_{1}}...r_{i_{k}}}\right\} .
\end{equation*}%
If $f$ is superquadratic then 
\begin{eqnarray*}
&&\mathcal{J}_{k}\left( f,\mathbf{p},\mathbf{q},\mathbf{x}\right) -m\mathcal{%
J}_{k}\left( f,\mathbf{r},\mathbf{q},\mathbf{x}\right) \\
&\geq &mf\left( \left\vert \sum_{i=1}^{n}\left( r_{i}-p_{i}\right)
x_{i}\right\vert \right) \\
&&+\sum_{i_{1},...,i_{k}=1}^{n}\left(
p_{i_{1}}...p_{i_{k}}-mr_{i_{1}}...r_{i_{k}}\right) f\left( \left\vert
\sum_{j=1}^{k}q_{j}x_{i_{j}}-\sum_{j=1}^{n}p_{j}x_{j}\right\vert \right)
\end{eqnarray*}%
and%
\begin{eqnarray*}
&&M\mathcal{J}_{k}\left( f,\mathbf{r},\mathbf{q},\mathbf{x}\right) -\mathcal{%
J}_{k}\left( f,\mathbf{p},\mathbf{q},\mathbf{x}\right) \\
&\geq &f\left( \left\vert \sum_{i=1}^{n}\left( r_{i}-p_{i}\right)
x_{i}\right\vert \right) \\
&&+\sum_{i_{1},...,i_{k}=1}^{n}\left(
Mr_{i_{1}}...r_{i_{k}}-p_{i_{1}}...p_{i_{k}}\right) f\left( \left\vert
\sum_{j=1}^{k}q_{j}x_{i_{j}}-\sum_{j=1}^{n}r_{j}x_{j}\right\vert \right) .
\end{eqnarray*}
\end{theorem}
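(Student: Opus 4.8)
The plan is to reduce Theorem \ref{18} to the one-step case already established in Theorem \ref{5}, by collapsing the $k$-fold sum into a single sum over multi-indices. Write $\mathbf{i}=(i_1,\dots,i_k)$ for a multi-index ranging over $\{1,\dots,n\}^k$, and set $P_{\mathbf{i}}:=p_{i_1}\cdots p_{i_k}$, $R_{\mathbf{i}}:=r_{i_1}\cdots r_{i_k}$, and $y_{\mathbf{i}}:=\sum_{j=1}^{k}q_j x_{i_j}$. Since $\sum_{i=1}^{n}p_i=\sum_{i=1}^{n}r_i=1$, distributing the product shows that $\{P_{\mathbf{i}}\}$ and $\{R_{\mathbf{i}}\}$ are again families of positive numbers summing to $1$; and because $I=[0,a]$ or $[0,\infty)$ is convex and contains every $x_i$, each $y_{\mathbf{i}}$ is a convex combination lying in $I$. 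Hence the data $(\{P_{\mathbf{i}}\},\{R_{\mathbf{i}}\},\{y_{\mathbf{i}}\})$, indexed by $\{1,\dots,n\}^{k}$, satisfies the hypotheses of Theorem \ref{5} verbatim, with the index $n$ there replaced by $n^{k}$.

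The decisive observation is that $\mathcal{J}_k$ is nothing but the ordinary Jensen functional for this aggregated data. Indeed, the barycenters coincide:
\begin{equation*}
\sum_{\mathbf{i}}P_{\mathbf{i}}y_{\mathbf{i}}=\sum_{i=1}^{n}p_i x_i,
\qquad
\sum_{\mathbf{i}}R_{\mathbf{i}}y_{\mathbf{i}}=\sum_{i=1}^{n}r_i x_i,
\end{equation*}
which I would verify by interchanging summations and noting that, for each fixed $j$, the sum $\sum_{\mathbf{i}}P_{\mathbf{i}}x_{i_j}$ factors as $\big(\sum_{i_j}p_{i_j}x_{i_j}\big)\prod_{l\neq j}\big(\sum_{i_l}p_{i_l}\big)=\sum_{i=1}^{n}p_i x_i$, after which $\sum_{j=1}^{k}q_j=1$ completes the evaluation. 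Consequently the constant term $f\big(\sum_{i=1}^n p_i x_i\big)$ in the definition of $\mathcal{J}_k$ is exactly $f$ evaluated at the $\mathbf{P}$-barycenter of the $y_{\mathbf{i}}$, so that $\mathcal{J}_k(f,\mathbf{p},\mathbf{q},\mathbf{x})=\mathcal{J}(f,\{P_{\mathbf{i}}\},\{y_{\mathbf{i}}\})$, and likewise with $\mathbf{r}$ in place of $\mathbf{p}$.

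With these identifications the proof becomes a transcription of Theorem \ref{5}. The constants appearing there, $\min_{\mathbf{i}}P_{\mathbf{i}}/R_{\mathbf{i}}$ and $\max_{\mathbf{i}}P_{\mathbf{i}}/R_{\mathbf{i}}$, are precisely the $m$ and $M$ of Theorem \ref{18}. The term $\big|\sum_{\mathbf{i}}(R_{\mathbf{i}}-P_{\mathbf{i}})y_{\mathbf{i}}\big|$ collapses, via the barycentric identity, to $\big|\sum_{i=1}^{n}(r_i-p_i)x_i\big|$; each deviation $\big|y_{\mathbf{i}}-\sum_{\mathbf{j}}P_{\mathbf{j}}y_{\mathbf{j}}\big|$ becomes $\big|\sum_{j=1}^{k}q_j x_{i_j}-\sum_{j=1}^{n}p_j x_j\big|$; and in the second inequality the corresponding deviation is taken against the $\mathbf{r}$-barycenter, producing $\big|\sum_{j=1}^{k}q_j x_{i_j}-\sum_{j=1}^{n}r_j x_j\big|$. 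Substituting these into the two bounds of Theorem \ref{5} yields exactly the two inequalities claimed.

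I expect no serious analytic obstacle here: the entire content lies in recognizing the multi-index reduction, and the one point demanding care is the combinatorial bookkeeping in the barycentric identity together with the verification that the aggregated weight families remain probability vectors. Once these are in place, superquadraticity is used only implicitly, through the already-proved Theorem \ref{5}, and no further appeal to the defining inequality \eqref{SQ} is needed.
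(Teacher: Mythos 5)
Your proof is correct, but it takes a genuinely different route from the paper's. The paper never proves Theorem \ref{18} directly: the statement is quoted from Abramovich--Dragomir, and the paper's own contribution is the more general Theorem \ref{20}, which allows distinct weight vectors $\mathbf{p}_{1},\dots,\mathbf{p}_{k}$ and point vectors $\mathbf{x}_{1},\dots,\mathbf{x}_{k}$ in the $k$ slots; Theorem \ref{18} is then recovered in a remark by specializing $\mathbf{p}_{1}=\cdots=\mathbf{p}_{k}=\mathbf{p}$, $\mathbf{x}_{1}=\cdots=\mathbf{x}_{k}=\mathbf{x}$ (and likewise for $\mathbf{r}$). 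The proof of Theorem \ref{20} is a direct superquadraticity argument: with $P_{\mathbf{j}}$, $R_{\mathbf{j}}$, $y_{\mathbf{j}}$ as in your notation, the difference $\mathcal{J}_{k}(f,\mathbf{p}_{1},\dots)-m\mathcal{J}_{k}(f,\mathbf{r}_{1},\dots)$ is expanded as $\sum_{\mathbf{j}}\bigl(P_{\mathbf{j}}-mR_{\mathbf{j}}\bigr)f(y_{\mathbf{j}})+mf\bigl(\sum_{i}q_{i}\sum_{j}r_{ij}x_{ij}\bigr)-f(\bar{x})$, the mean $\bar{x}$ is exhibited as the convex combination of the points $y_{\mathbf{j}}$ (weights $P_{\mathbf{j}}-mR_{\mathbf{j}}\geq 0$) together with the $\mathbf{r}$-barycenter (weight $m$), and Lemma \ref{1} is applied to this extended system. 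You instead black-box Theorem \ref{5} and reduce to it by relabeling multi-indices, the barycenter identity $\sum_{\mathbf{i}}P_{\mathbf{i}}y_{\mathbf{i}}=\sum_{i=1}^{n}p_{i}x_{i}$ being the only computation; your verifications of positivity, normalization, and $y_{\mathbf{i}}\in I$ are exactly the hypotheses needed, so the reduction is sound. What your route buys is economy and transparency: it reveals Theorem \ref{18} as literally Theorem \ref{5} for $n^{k}$ points with product weights, with no fresh appeal to the defining inequality of superquadraticity. What the paper's route buys is generality and self-containedness: its decomposition argument establishes Theorem \ref{20}, which your statement-specific reduction does not address --- although it is worth observing that your aggregation would prove Theorem \ref{20} just as well, since the same interchange of sums gives $\sum_{\mathbf{j}}P_{\mathbf{j}}y_{\mathbf{j}}=\bar{x}$ even when the $\mathbf{p}_{i}$ and $\mathbf{x}_{i}$ differ from slot to slot. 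One caveat on logical bookkeeping: Theorem \ref{5} is itself only quoted (not proved) in this paper, so your derivation ultimately rests on the external reference, which is fair given that the statement you were asked to prove is quoted from the same source.
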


Results involving Jensen's and Chebychev's inequalities are sometimes stated
in terms of probability measures rather than summation or Lebesgue
integration. Then some of our results can be derived from the ones above
applied to a product measure, as Gord Sinnamon pointed out during some
useful discussions. \ 

Section 3 contains a definition of such functionals and analogue results.
Their study is done for the discrete and integral case, not in probabilistic
terms. The distinction between summation and integration is not artificial,
but useful for different areas of study as information theory and transport
theory.

For convex, strong convex and superquadratic functions the interested reader
can also find relevant results in \cite{mit10} and \cite{mitCIA10}.\pagebreak

\bigskip

\section{Main results}

\subsection{More on the Jensen functional}

\begin{theorem}
Let $f$ be a superquadratic function defined on an interval $I=\left[ 0,a%
\right] $ or $\left[ 0,\infty \right) $, $x_{1},x_{2},...,x_{n}\in I$ and $%
p_{1},p_{2},...,p_{n}\in \left( 0,1\right) $ such that $%
\sum_{i=1}^{n}p_{i}=1,$ $\lambda \in \left[ 0,1\right] .$ Then 
\begin{equation}
\sum_{i=1}^{n}p_{i}f\left( \left( 1-\lambda \right)
\sum_{j=1}^{n}p_{j}x_{j}+\lambda x_{i}\right) -f\left(
\sum_{j=1}^{n}p_{j}x_{j}\right) \geq \sum_{i=1}^{n}p_{i}f\left( \lambda
\left\vert x_{i}-\sum_{j=1}^{n}p_{j}x_{j}\right\vert \right) .  \label{N}
\end{equation}
\end{theorem}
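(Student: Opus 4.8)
The plan is to recognize the left-hand side of \eqref{N} as an ordinary Jensen functional $\mathcal{J}(f,\mathbf{p},\cdot)$ evaluated at cleverly chosen nodes, and then to quote Proposition \ref{6}. Writing $\bar{x} := \sum_{j=1}^{n} p_j x_j$ for the weighted mean, I would introduce the contracted nodes
\[
y_i := (1-\lambda)\bar{x} + \lambda x_i, \qquad i=1,\dots,n,
\]
each of which is a convex combination of the points $\bar{x}, x_i \in I$ and therefore again lies in $I$; in particular $y_i \geq 0$, so that the hypotheses of Proposition \ref{6} will be satisfied by $\mathbf{y} = (y_1,\dots,y_n)$ together with $\mathbf{p}$.

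The key point I would verify first is that this affine contraction toward the mean leaves the mean unchanged: because $\sum_i p_i = 1$,
\[
\sum_{i=1}^{n} p_i y_i = (1-\lambda)\bar{x} + \lambda \sum_{i=1}^{n} p_i x_i = \bar{x}.
\]
Hence $f\!\left(\sum_i p_i y_i\right) = f(\bar{x})$, and the left-hand side of \eqref{N} is precisely $\sum_i p_i f(y_i) - f\!\left(\sum_i p_i y_i\right) = \mathcal{J}(f,\mathbf{p},\mathbf{y})$.

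I would then apply Proposition \ref{6} to $\mathbf{y}$ and $\mathbf{p}$ to obtain
\[
\mathcal{J}(f,\mathbf{p},\mathbf{y}) \geq \sum_{i=1}^{n} p_i f\!\left(\left| y_i - \bar{x} \right|\right),
\]
and finish by the elementary computation $y_i - \bar{x} = \lambda(x_i - \bar{x})$, which gives $\left| y_i - \bar{x}\right| = \lambda\left| x_i - \sum_j p_j x_j\right|$ and turns the bound into exactly the right-hand side of \eqref{N}.

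There is no genuinely hard step here; the whole content is the mean-preservation identity $\sum_i p_i y_i = \bar{x}$. The only thing demanding a moment's care is checking that the contracted nodes stay inside $I$ (needed so that both superquadraticity and Proposition \ref{6} apply), which holds because $I$ is an interval containing $0$ and $\bar{x}$. As a sanity check, $\lambda = 1$ recovers Proposition \ref{6} verbatim, while $\lambda = 0$ collapses both sides to the statement $0 \geq f(0)$, consistent with $f(0)\leq 0$.
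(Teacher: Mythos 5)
Your proof is correct, but it follows a genuinely different route from the paper's. The paper works directly from the definition (\ref{SQ}): it substitutes $(1-\lambda)x+\lambda y$ in place of $y$ to obtain the pointwise ``contracted'' inequality
\begin{equation*}
f\left( \left( 1-\lambda \right) x+\lambda y\right) -f(x)\geq f\left( \lambda
|y-x|\right) +\lambda C(x)(y-x),
\end{equation*}
then sets $x=\sum_{j}p_{j}x_{j}$, $y=x_{i}$, multiplies by $p_{i}$ and sums, the $C$-terms cancelling because $\sum_{i}p_{i}\left( x_{i}-\sum_{j}p_{j}x_{j}\right) =0$. You instead never touch $C(x)$ at all: you perform the change of variables $y_{i}=(1-\lambda )\bar{x}+\lambda x_{i}$, observe that this contraction preserves the mean, and invoke Proposition \ref{6} as a black box, finishing with $|y_{i}-\bar{x}|=\lambda |x_{i}-\bar{x}|$. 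Your reduction is shorter and more modular, and it makes transparent both the $\lambda =1$ specialization and the fact that the theorem is really Proposition \ref{6} applied to contracted data; it does require the (easy but necessary) check that the $y_{i}$ remain in $I$, which you correctly supplied. The paper's argument is self-contained --- it does not presuppose Proposition \ref{6} --- and its intermediate pointwise inequality has independent interest; indeed, if you unfold the standard proof of Proposition \ref{6} inside your argument, you recover exactly the paper's computation, so the two proofs coincide at the level of the underlying estimates while differing in organization.
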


\begin{proof}
Let $f$ be a superquadratic function with $C(x)$ defined as above. Then
replacing $y$ by $\left( 1-\lambda \right) x+\lambda y$ in (\ref{SQ}) we
deduce the inequality%
\begin{equation*}
f(\left( 1-\lambda \right) x+\lambda y)-f(x)\geq f(\lambda |y-x|)+\lambda
C(x)(y-x).
\end{equation*}%
In this inequality we put $x=\sum_{j=1}^{n}p_{j}x_{j}$ and $y=x_{i}.$
Multiplying by $p_{i}$ and summing over $i$ we get the conclusion.
\end{proof}

For $\lambda =1$ we recover the result of Proposition \ref{6}.

As an immediat consequence of this result, due to the convexity of positive
superquadratic functions, we get the following lower bound of interest:

\begin{corollary}
\label{N1}Let $f\geq 0$ be a superquadratic function defined on an interval $%
I=\left[ 0,a\right] $ or $\left[ 0,\infty \right) $, $x_{1},x_{2},...,x_{n}%
\in I$ and $p_{1},p_{2},...,p_{n}\in \left( 0,1\right) $ such that $%
\sum_{i=1}^{n}p_{i}=1,$ $\lambda \in \left[ 0,1\right] .$ Then 
\begin{equation*}
\mathcal{J}\left( f,\mathbf{p},\mathbf{x}\right) \geq
2\sum_{i=1}^{n}p_{i}f\left( \frac{1}{2}\left\vert
x_{i}-\sum_{j=1}^{n}p_{j}x_{j}\right\vert \right) .
\end{equation*}
\end{corollary}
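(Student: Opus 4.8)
The plan is to specialize inequality (\ref{N}) to $\lambda = \tfrac{1}{2}$ and then feed in the convexity of nonnegative superquadratic functions. Writing $\bar{x} = \sum_{j=1}^{n} p_j x_j$ for the weighted barycenter, the theorem at $\lambda = \tfrac{1}{2}$ gives
\[
\sum_{i=1}^{n} p_i f\!\left(\tfrac{1}{2}\bar{x} + \tfrac{1}{2}x_i\right) - f(\bar{x}) \;\geq\; \sum_{i=1}^{n} p_i f\!\left(\tfrac{1}{2}\left|x_i - \bar{x}\right|\right).
\]
The right-hand side is already exactly half of the target lower bound, so the entire task reduces to showing that the left-hand side is dominated by $\tfrac{1}{2}\mathcal{J}(f,\mathbf{p},\mathbf{x})$.

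To obtain that domination I would invoke part (iii) of the Lemma quoted in the introduction: since $f \geq 0$ is superquadratic, it is convex. Convexity applied at the midpoint $\tfrac{1}{2}\bar{x} + \tfrac{1}{2}x_i$ yields $f(\tfrac{1}{2}\bar{x} + \tfrac{1}{2}x_i) \leq \tfrac{1}{2}f(\bar{x}) + \tfrac{1}{2}f(x_i)$ for each $i$. Multiplying by $p_i$, summing over $i$, and using $\sum_{i} p_i = 1$ then gives
\[
\sum_{i=1}^{n} p_i f\!\left(\tfrac{1}{2}\bar{x} + \tfrac{1}{2}x_i\right) - f(\bar{x}) \;\leq\; \tfrac{1}{2}\sum_{i=1}^{n} p_i f(x_i) - \tfrac{1}{2}f(\bar{x}) \;=\; \tfrac{1}{2}\mathcal{J}(f,\mathbf{p},\mathbf{x}).
\]

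Chaining the two displays produces $\tfrac{1}{2}\mathcal{J}(f,\mathbf{p},\mathbf{x}) \geq \sum_{i=1}^{n} p_i f(\tfrac{1}{2}|x_i - \bar{x}|)$, and multiplying through by $2$ is precisely the asserted inequality. The argument is essentially mechanical; the only point that needs care—and the one place where the hypothesis $f \geq 0$ (rather than mere superquadraticity) is actually used—is the direction of the convexity estimate, which must bound the midpoint sum from \emph{above} so that it can be inserted between $\tfrac{1}{2}\mathcal{J}$ and the superquadratic remainder term. I do not expect any genuine obstacle beyond the choice $\lambda = \tfrac{1}{2}$, which is forced by the requirement that the factor $\tfrac{1}{2}$ appearing inside $f$ on the right-hand side of (\ref{N}) match the midpoint at which convexity is invoked.
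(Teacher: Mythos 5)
Your proposal is correct and follows essentially the same route as the paper's own proof: specialize (\ref{N}) to $\lambda =\frac{1}{2}$, then use the convexity of the nonnegative superquadratic $f$ (Jensen's inequality at the midpoint $\frac{1}{2}\left( \sum_{j=1}^{n}p_{j}x_{j}+x_{i}\right) $) to dominate the resulting left-hand side by $\frac{1}{2}\mathcal{J}\left( f,\mathbf{p},\mathbf{x}\right) $ and chain the two inequalities. No gaps; your remark about where $f\geq 0$ is needed is exactly the point of the paper's appeal to Jensen's inequality.
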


\begin{proof}
In (\ref{N}) we consider $\lambda =\frac{1}{2}:$%
\begin{equation}
\sum_{i=1}^{n}p_{i}f\left( \frac{\sum_{j=1}^{n}p_{j}x_{j}+x_{i}}{2}\right)
-f\left( \sum_{j=1}^{n}p_{j}x_{j}\right) \geq \sum_{i=1}^{n}p_{i}f\left( 
\frac{1}{2}\left\vert x_{i}-\sum_{j=1}^{n}p_{j}x_{j}\right\vert \right) .
\end{equation}%
Since by Jensen's inequality one has%
\begin{equation*}
\frac{1}{2}\left[ f\left( \sum_{j=1}^{n}p_{j}x_{j}\right) +f\left(
x_{i}\right) \right] \geq f\left( \frac{\sum_{j=1}^{n}p_{j}x_{j}+x_{i}}{2}%
\right) ,
\end{equation*}%
we get 
\begin{equation}
\frac{1}{2}\sum_{i=1}^{n}p_{i}\left[ f\left( \sum_{j=1}^{n}p_{j}x_{j}\right)
+f\left( x_{i}\right) \right] -f\left( \sum_{j=1}^{n}p_{j}x_{j}\right) \geq
\sum_{i=1}^{n}p_{i}f\left( \frac{1}{2}\left\vert
x_{i}-\sum_{j=1}^{n}p_{j}x_{j}\right\vert \right) .
\end{equation}%
This completes the proof.
\end{proof}

The interested reader can refine our last result by using refinements of
Jensen's inequality instead of the classic result.

\subsection{The discrete case}

Motivated by the above results, introduce in a natural other functionals.

\begin{definition}
\label{112}Assume that we have a real valued function $f$ defined on an
interval $I,$ the real numbers $p_{ij},$ $i=1,...,k\ $and $j=1,...,n_{i}$
such that $p_{ij}>0,$ $\sum_{j=1}^{n_{i}}p_{ij}=1$ for all $i=1,...,k\ $(we
denote $\mathbf{p}_{i}=$ $\left( p_{i1},p_{i2},...,p_{in_{i}}\right) $), $%
\mathbf{x}_{i}=$ $\left( x_{i1},x_{i2},...,x_{in_{i}}\right) \in I^{n_{i}}$
for all $i=1,...,k\ $and $\mathbf{q}=\left( q_{1},q_{2},...,q_{k}\right) ,$ $%
q_{i}>0$ such that $\sum_{i=1}^{k}q_{i}=1.$ We define the generalized \emph{%
Jensen functional} by%
\begin{eqnarray*}
\mathcal{J}_{k}\left( f,\mathbf{p}_{1},...,\mathbf{p}_{k},\mathbf{q},\mathbf{%
x}_{1},...,\mathbf{x}_{k}\right)
&:&=\sum_{j_{1},...,j_{k}=1}^{n_{1},...,n_{k}}p_{1j_{1}}...p_{kj_{k}}f\left(
\sum_{i=1}^{k}q_{i}x_{ij_{i}}\right) \\
&&-f\left( \sum_{i=1}^{k}q_{i}\sum_{j=1}^{n_{i}}p_{ij}x_{ij}\right) .
\end{eqnarray*}%
and the generalized\emph{\ Chebychev functional} by:%
\begin{eqnarray*}
&&\mathcal{T}_{k}\left( f,\mathbf{p}_{1},...,\mathbf{p}_{k},\mathbf{q},%
\mathbf{x}_{1},...,\mathbf{x}_{k}\right) \\
&=&\sum_{j_{1},...,j_{k}=1}^{n_{1},...,n_{k}}p_{1j_{1}}...p_{kj_{k}}%
\sum_{i=1}^{k}q_{i}\left( x_{ij_{i}}-\sum_{j=1}^{n_{i}}p_{ij}x_{ij}\right)
f\left( \sum_{i=1}^{k}q_{i}x_{ij_{i}}\right) .
\end{eqnarray*}
\end{definition}

We easily notice also that for $k=1$ this definition reduces to Definition %
\ref{main}. In \cite{mit10} the following estimation is obtained:

\begin{remark}
\label{7}If $f$ is a convex function then we have 
\begin{eqnarray*}
&&\min_{\substack{ 1\leq j_{1}\leq n_{1}  \\ ...  \\ 1\leq j_{k}\leq n_{k}}}%
\left\{ \frac{p_{1j_{1}}...p_{kj_{k}}}{r_{1j_{1}}...r_{kj_{k}}}\right\} 
\mathcal{J}_{k}\left( f,\mathbf{r}_{1},...,\mathbf{r}_{k},\mathbf{q},\mathbf{%
x}_{1},...,\mathbf{x}_{k}\right) \\
&\leq &\mathcal{J}_{k}\left( f,\mathbf{p}_{1},...,\mathbf{p}_{k},\mathbf{q},%
\mathbf{x}_{1},...,\mathbf{x}_{k}\right) \\
&\leq &\max_{\substack{ 1\leq j_{1}\leq n_{1}  \\ ...  \\ 1\leq j_{k}\leq
n_{k} }}\left\{ \frac{p_{1j_{1}}...p_{kj_{k}}}{r_{1j_{1}}...r_{kj_{k}}}%
\right\} \mathcal{J}_{k}\left( f,\mathbf{r}_{1},...,\mathbf{r}_{k},\mathbf{q}%
,\mathbf{x}_{1},...,\mathbf{x}_{k}\right) .
\end{eqnarray*}
\end{remark}

In this paper, we investigate upper and lower bounds that we have if the
function $f$ is superquadratic.\ 

Now we extend the earlier results. The following lemma describes the
behaviour of the functional under the superquadraticity condition:

\begin{lemma}
\label{1}Let $\mathbf{p}_{i},$ $\mathbf{x}_{i},$ $\mathbf{q}$ be as in
Definition \ref{112}. If $f$ is superquadratic then we have 
\begin{equation*}
\mathcal{J}_{k}\left( f,\mathbf{p}_{1},...,\mathbf{p}_{k},\mathbf{q},\mathbf{%
x}_{1},...,\mathbf{x}_{k}\right) \geq
\sum_{j_{1},...,j_{k}=1}^{n_{1},...,n_{k}}p_{1j_{1}}...p_{kj_{k}}f\left(
\left\vert \sum_{i=1}^{k}q_{i}x_{ij_{i}}-\bar{x}\right\vert \right) ,
\end{equation*}%
where $\bar{x}=\sum_{i=1}^{k}q_{i}\sum_{j=1}^{n_{i}}p_{ij}x_{ij}$ (we will
keep this notation throughout this subsection).
\end{lemma}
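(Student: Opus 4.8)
The plan is to recognize the generalized functional as an ordinary Jensen functional over a product index set and then invoke Proposition~\ref{6} directly. Introduce the composite weights $P_{j_1\dots j_k}:=p_{1j_1}\cdots p_{kj_k}$ and composite nodes $y_{j_1\dots j_k}:=\sum_{i=1}^{k}q_i x_{ij_i}$, both indexed by the tuple $(j_1,\dots,j_k)$ ranging over $\{1,\dots,n_1\}\times\cdots\times\{1,\dots,n_k\}$. With this notation the definition of $\mathcal{J}_k$ becomes $\sum_{j_1,\dots,j_k}P_{j_1\dots j_k}\,f\!\left(y_{j_1\dots j_k}\right)-f(\bar x)$, so it suffices to check that $(P_{j_1\dots j_k})$ is a probability vector whose barycenter against the nodes $(y_{j_1\dots j_k})$ equals $\bar x$; the asserted inequality is then nothing but Proposition~\ref{6} applied to this reindexed data.

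First I would record the two marginalization identities that make the reduction work. Factoring the multiple sum gives $\sum_{j_1,\dots,j_k}P_{j_1\dots j_k}=\prod_{i=1}^{k}\big(\sum_{j=1}^{n_i}p_{ij}\big)=1$, so the weights are positive and sum to one. For the barycenter, interchanging the order of summation and using that the factor $x_{ij_i}$ depends only on the $i$-th index isolates one marginal at a time:
\begin{align*}
\sum_{j_1,\dots,j_k}P_{j_1\dots j_k}\,y_{j_1\dots j_k}
&=\sum_{i=1}^{k}q_i\Big(\prod_{\ell\neq i}\sum_{j=1}^{n_\ell}p_{\ell j}\Big)\sum_{j=1}^{n_i}p_{ij}x_{ij}\\
&=\sum_{i=1}^{k}q_i\sum_{j=1}^{n_i}p_{ij}x_{ij}=\bar x.
\end{align*}
Since each $x_{ij}\in I\subseteq[0,\infty)$ and $q_i>0$, every node $y_{j_1\dots j_k}$ is nonnegative, so the hypotheses of Proposition~\ref{6} are satisfied for the pair $(P,y)$.

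With these identities in hand the conclusion is immediate, for Proposition~\ref{6} gives $\sum_{j_1,\dots,j_k}P_{j_1\dots j_k}f\!\left(y_{j_1\dots j_k}\right)-f(\bar x)\ge\sum_{j_1,\dots,j_k}P_{j_1\dots j_k}f\!\left(\left\vert y_{j_1\dots j_k}-\bar x\right\vert\right)$, which is exactly the claimed estimate. Alternatively, and in the self-contained style of the earlier proofs in this section, one may instead substitute $x=\bar x$ and $y=y_{j_1\dots j_k}$ into~(\ref{SQ}), multiply by $P_{j_1\dots j_k}>0$, and sum over all tuples; the linear contribution $C(\bar x)\big(\sum P_{j_1\dots j_k}y_{j_1\dots j_k}-\bar x\big)$ then vanishes precisely because of the barycenter identity above. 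I expect the only real work to be this bookkeeping collapse of the $k$-fold sum to $\bar x$; once that marginalization is in place, superquadraticity supplies the rest and there is no genuine obstacle.
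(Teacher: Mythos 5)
Your proposal is correct, and it in fact contains the paper's own argument: the paper disposes of this lemma with the single line ``Straightforward from the definition of superquadratic functions,'' which is precisely your alternative route --- substitute $x=\bar{x}$, $y=\sum_{i=1}^{k}q_{i}x_{ij_{i}}$ into (\ref{SQ}), multiply by $p_{1j_{1}}\cdots p_{kj_{k}}$, sum, and observe that the $C(\bar{x})$ term vanishes by the barycenter identity. Your primary route, reindexing over the product set $\{1,\dots,n_{1}\}\times\cdots\times\{1,\dots,n_{k}\}$ and invoking Proposition \ref{6} for the composite data $(P,y)$, is the same computation packaged as a formal reduction; what it buys is a clean conceptual point the paper leaves implicit, namely that the generalized functional $\mathcal{J}_{k}$ is literally an ordinary Jensen functional over a product index set, so the generalized lemma is a corollary rather than a parallel argument. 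In either packaging the only genuine content is the marginalization identity $\sum_{j_{1},\dots,j_{k}}P_{j_{1}\dots j_{k}}\,y_{j_{1}\dots j_{k}}=\bar{x}$, which you verify correctly (and you rightly note that each node $y_{j_{1}\dots j_{k}}$ lies in $I$ as a convex combination of points of $I$, so Proposition \ref{6} applies).
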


\begin{proof}
Straightforward from the definition of superquadratic functions.
\end{proof}

Using the same recipe as in the proof of Corollary \ref{N1}, we get:

\begin{corollary}
Let $\mathbf{p}_{i},$ $\mathbf{x}_{i},$ $\mathbf{q}$ be as in Definition \ref%
{112}. Let $f\geq 0$ be a superquadratic function defined on an interval $I=%
\left[ 0,a\right] $ or $\left[ 0,\infty \right) $, $\lambda \in \left[ 0,1%
\right] .$ Then 
\begin{equation*}
\mathcal{J}_{k}\left( f,\mathbf{p}_{1},...,\mathbf{p}_{k},\mathbf{q},\mathbf{%
x}_{1},...,\mathbf{x}_{k}\right) \geq
2\sum_{j_{1},...,j_{k}=1}^{n_{1},...,n_{k}}p_{1j_{1}}...p_{kj_{k}}f\left( 
\frac{1}{2}\left\vert \sum_{i=1}^{k}q_{i}x_{ij_{i}}-\bar{x}\right\vert
\right) .
\end{equation*}
\end{corollary}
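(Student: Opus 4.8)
The plan is to mimic the proof of Corollary \ref{N1} step for step, the only new ingredient being a $\lambda$-parametrized refinement of Lemma \ref{1} that plays the role inequality (\ref{N}) played there. Concretely, I would first establish, for every $\lambda \in \left[ 0,1\right] $,
\begin{equation*}
\sum_{j_{1},...,j_{k}=1}^{n_{1},...,n_{k}}p_{1j_{1}}...p_{kj_{k}}f\left( \left( 1-\lambda \right) \bar{x}+\lambda \sum_{i=1}^{k}q_{i}x_{ij_{i}}\right) -f\left( \bar{x}\right) \geq \sum_{j_{1},...,j_{k}=1}^{n_{1},...,n_{k}}p_{1j_{1}}...p_{kj_{k}}f\left( \lambda \left\vert \sum_{i=1}^{k}q_{i}x_{ij_{i}}-\bar{x}\right\vert \right) ,
\end{equation*}
which collapses to Lemma \ref{1} at $\lambda =1$.

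To obtain this refinement I would start, exactly as in the derivation of (\ref{N}), from (\ref{SQ}) with $y$ replaced by $\left( 1-\lambda \right) x+\lambda y$, that is $f\left( \left( 1-\lambda \right) x+\lambda y\right) -f\left( x\right) \geq f\left( \lambda \left\vert y-x\right\vert \right) +\lambda C\left( x\right) \left( y-x\right) $. I would then set $x=\bar{x}$ and $y=\sum_{i=1}^{k}q_{i}x_{ij_{i}}$, multiply by $p_{1j_{1}}...p_{kj_{k}}$, and sum over all multi-indices $\left( j_{1},...,j_{k}\right) $. The first two terms reproduce the two sides of the displayed refinement, and the whole argument rests on the linear correction term disappearing after summation.

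The one step that needs genuine care — and which I expect to be the main obstacle — is verifying that $\sum_{j_{1},...,j_{k}=1}^{n_{1},...,n_{k}}p_{1j_{1}}...p_{kj_{k}}\left( \sum_{i=1}^{k}q_{i}x_{ij_{i}}-\bar{x}\right) =0$, so that the $\lambda C\left( \bar{x}\right) $ term drops out. This is the multi-index bookkeeping already underlying Lemma \ref{1}: interchanging the summations and using that, for each fixed $i$, the factors $p_{\ell j_{\ell }}$ with $\ell \neq i$ each sum to $1$, one gets $\sum_{j_{1},...,j_{k}}p_{1j_{1}}...p_{kj_{k}}\sum_{i=1}^{k}q_{i}x_{ij_{i}}=\sum_{i=1}^{k}q_{i}\sum_{j=1}^{n_{i}}p_{ij}x_{ij}=\bar{x}$, while $\sum_{j_{1},...,j_{k}}p_{1j_{1}}...p_{kj_{k}}=1$; hence the correction term equals $\lambda C\left( \bar{x}\right) \left( \bar{x}-\bar{x}\right) =0$.

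With the refinement available, the rest is the ``recipe'' of Corollary \ref{N1}. Specializing to $\lambda =\frac{1}{2}$ and invoking the convexity of nonnegative superquadratic functions, each value $f\left( \frac{\bar{x}+\sum_{i=1}^{k}q_{i}x_{ij_{i}}}{2}\right) $ on the left is bounded above by $\frac{1}{2}\left[ f\left( \bar{x}\right) +f\left( \sum_{i=1}^{k}q_{i}x_{ij_{i}}\right) \right] $. Replacing it only enlarges the left-hand side, which then simplifies — using $\sum_{j_{1},...,j_{k}}p_{1j_{1}}...p_{kj_{k}}=1$ — to exactly $\frac{1}{2}\mathcal{J}_{k}\left( f,\mathbf{p}_{1},...,\mathbf{p}_{k},\mathbf{q},\mathbf{x}_{1},...,\mathbf{x}_{k}\right) $, just as the corresponding sum did in Corollary \ref{N1}. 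Hence $\frac{1}{2}\mathcal{J}_{k}\geq \sum_{j_{1},...,j_{k}}p_{1j_{1}}...p_{kj_{k}}f\left( \frac{1}{2}\left\vert \sum_{i=1}^{k}q_{i}x_{ij_{i}}-\bar{x}\right\vert \right) $, and multiplying by $2$ yields the asserted inequality.
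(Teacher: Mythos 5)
Your proposal is correct and follows exactly the ``same recipe'' the paper itself invokes: you build the multi-index analogue of inequality (\ref{N}) (with the linear term $\lambda C(\bar{x})\sum p_{1j_{1}}...p_{kj_{k}}\left( \sum_{i}q_{i}x_{ij_{i}}-\bar{x}\right) $ vanishing by the same bookkeeping that underlies Lemma \ref{1}), then specialize to $\lambda =\frac{1}{2}$ and use the convexity of nonnegative superquadratic functions, just as in Corollary \ref{N1}. Nothing is missing; your verification that the correction term drops out is the only point requiring care, and you handle it correctly.
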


The next result of the paper can be expressed as:

\begin{theorem}
\label{20}Let $f,$ $\mathbf{p}_{i},$ $\mathbf{x}_{i}\ $and $\mathbf{q\ }$be
as in Definition \ref{112} and the positive real numbers $r_{ij},$ $%
i=1,...,k\ $and $j=1,...,n_{i}$ such that $\sum_{j=1}^{n_{i}}r_{ij}=1$ for
all $i=1,...,k.$ We denote 
\begin{eqnarray*}
\mathbf{r}_{i} &=&\left( r_{i1},r_{i2},...,r_{in_{i}}\right) \ \text{\ for
all }i=1,...,k, \\
m &=&\min_{\substack{ 1\leq j_{1}\leq n_{1}  \\ ...  \\ 1\leq j_{k}\leq
n_{k} }}\left\{ \frac{p_{1j_{1}}...p_{kj_{k}}}{r_{1j_{1}}...r_{kj_{k}}}%
\right\} , \\
M &=&\max_{\substack{ 1\leq j_{1}\leq n_{1}  \\ ...  \\ 1\leq j_{k}\leq
n_{k} }}\left\{ \frac{p_{1j_{1}}...p_{kj_{k}}}{r_{1j_{1}}...r_{kj_{k}}}%
\right\} .
\end{eqnarray*}%
If $f$ is a superquadratic function, then:%
\begin{eqnarray*}
&&\mathcal{J}_{k}\left( f,\mathbf{p}_{1},...,\mathbf{p}_{k},\mathbf{q},%
\mathbf{x}_{1},...,\mathbf{x}_{k}\right) -m\mathcal{J}_{k}\left( f,\mathbf{r}%
_{1},...,\mathbf{r}_{k},\mathbf{q},\mathbf{x}_{1},...,\mathbf{x}_{k}\right)
\\
&\geq &mf\left( \left\vert \sum_{i=1}^{k}q_{i}\sum_{j=1}^{n_{i}}\left(
r_{ij}-p_{ij}\right) x_{ij}\right\vert \right) \\
&&+\sum_{j_{1},...,j_{k}=1}^{n_{1},...,n_{k}}\left(
p_{1j_{1}}...p_{kj_{k}}-mr_{1j_{1}}...r_{kj_{k}}\right) f\left( \left\vert
\sum_{i=1}^{k}q_{i}x_{ij_{i}}-\bar{x}\right\vert \right)
\end{eqnarray*}%
and%
\begin{eqnarray*}
&&M\mathcal{J}_{k}\left( f,\mathbf{r}_{1},...,\mathbf{r}_{k},\mathbf{q},%
\mathbf{x}_{1},...,\mathbf{x}_{k}\right) -\mathcal{J}_{k}\left( f,\mathbf{p}%
_{1},...,\mathbf{p}_{k},\mathbf{q},\mathbf{x}_{1},...,\mathbf{x}_{k}\right)
\\
&\geq &f\left( \left\vert \sum_{i=1}^{k}q_{i}\sum_{j=1}^{n_{i}}\left(
r_{ij}-p_{ij}\right) x_{ij}\right\vert \right) \\
&&+\sum_{j_{1},...,j_{k}=1}^{n_{1},...,n_{k}}\left(
Mr_{1j_{1}}...r_{kj_{k}}-p_{1j_{1}}...p_{kj_{k}}\right) f\left( \left\vert
\sum_{i=1}^{k}q_{i}x_{ij_{i}}-\bar{x}\right\vert \right) .
\end{eqnarray*}

\begin{proof}
We prove only the first inequality. Obviously%
\begin{eqnarray*}
&&\mathcal{J}_{k}\left( f,\mathbf{p}_{1},...,\mathbf{p}_{k},\mathbf{q},%
\mathbf{x}_{1},...,\mathbf{x}_{k}\right) -m\mathcal{J}_{k}\left( f,\mathbf{r}%
_{1},...,\mathbf{r}_{k},\mathbf{q},\mathbf{x}_{1},...,\mathbf{x}_{k}\right)
\\
&=&\sum_{j_{1},...,j_{k}=1}^{n_{1},...,n_{k}}\left(
p_{1j_{1}}...p_{kj_{k}}-mr_{1j_{1}}...r_{kj_{k}}\right) f\left(
\sum_{i=1}^{k}q_{i}x_{ij_{i}}\right) \\
&&+mf\left( \sum_{i=1}^{k}q_{i}\sum_{j=1}^{n_{i}}r_{ij}x_{ij}\right)
-f\left( \bar{x}\right) .
\end{eqnarray*}%
Since 
\begin{equation*}
\bar{x}=\sum_{j_{1},...,j_{k}=1}^{n_{1},...,n_{k}}\left(
p_{1j_{1}}...p_{kj_{k}}-mr_{1j_{1}}...r_{kj_{k}}\right)
\sum_{i=1}^{k}q_{i}x_{ij_{i}}+m\sum_{i=1}^{k}q_{i}%
\sum_{j=1}^{n_{i}}r_{ij}x_{ij}
\end{equation*}%
we conclude by Lemma \ref{1} that%
\begin{eqnarray*}
&&\mathcal{J}_{k}\left( f,\mathbf{p}_{1},...,\mathbf{p}_{k},\mathbf{q},%
\mathbf{x}_{1},...,\mathbf{x}_{k}\right) -m\mathcal{J}_{k}\left( f,\mathbf{r}%
_{1},...,\mathbf{r}_{k},\mathbf{q},\mathbf{x}_{1},...,\mathbf{x}_{k}\right)
\\
&\geq &\sum_{j_{1},...,j_{k}=1}^{n_{1},...,n_{k}}\left(
p_{1j_{1}}...p_{kj_{k}}-mr_{1j_{1}}...r_{kj_{k}}\right) f\left( \left\vert
\sum_{i=1}^{k}q_{i}x_{ij_{i}}-\bar{x}\right\vert \right) \\
&&+mf\left( \left\vert \sum_{i=1}^{k}q_{i}\sum_{j=1}^{n_{i}}r_{ij}x_{ij}-%
\bar{x}\right\vert \right)
\end{eqnarray*}%
\begin{eqnarray*}
&=&\sum_{j_{1},...,j_{k}=1}^{n_{1},...,n_{k}}\left(
p_{1j_{1}}...p_{kj_{k}}-mr_{1j_{1}}...r_{kj_{k}}\right) f\left( \left\vert
\sum_{i=1}^{k}q_{i}x_{ij_{i}}-\bar{x}\right\vert \right) \\
&&+mf\left( \left\vert \sum_{i=1}^{k}q_{i}\sum_{j=1}^{n_{i}}\left(
r_{ij}-p_{ij}\right) x_{ij}\right\vert \right) .
\end{eqnarray*}%
The proof of the other inequality goes likewise and we omit the details.
\end{proof}
\end{theorem}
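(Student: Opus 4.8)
The plan is to recognise the two functionals $\mathcal{J}_k(f,\mathbf p_1,\dots,\mathbf p_k,\mathbf q,\mathbf x_1,\dots,\mathbf x_k)$ and $\mathcal{J}_k(f,\mathbf r_1,\dots,\mathbf r_k,\mathbf q,\mathbf x_1,\dots,\mathbf x_k)$ as Jensen functionals built on the \emph{same} collection of evaluation points, and then to merge the difference into a single probability distribution to which the superquadratic Jensen inequality of Proposition \ref{6} (the basic inequality underlying Lemma \ref{1}) can be applied. First I would introduce the multi-index shorthand $P_J=p_{1j_1}\cdots p_{kj_k}$, $R_J=r_{1j_1}\cdots r_{kj_k}$ and $\xi_J=\sum_{i=1}^k q_i x_{ij_i}$ for $J=(j_1,\dots,j_k)$. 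Because each $\mathbf p_i$ and each $\mathbf r_i$ is a probability vector, $\sum_J P_J=\sum_J R_J=1$, and collapsing all but one inner index gives the two barycentric identities $\bar x=\sum_J P_J\xi_J$ and $\bar y:=\sum_{i=1}^k q_i\sum_{j=1}^{n_i} r_{ij}x_{ij}=\sum_J R_J\xi_J$. In particular $\bar y-\bar x=\sum_{i=1}^k q_i\sum_{j=1}^{n_i}(r_{ij}-p_{ij})x_{ij}$, which is exactly the quantity appearing inside the absolute value on the right-hand side.

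Writing $\mathcal{J}_k(\mathbf p)$ and $\mathcal{J}_k(\mathbf r)$ for the two functionals (suppressing the common arguments), Definition \ref{112} gives at once
\begin{equation*}
\mathcal{J}_k(\mathbf p)-m\,\mathcal{J}_k(\mathbf r)=\sum_J\left(P_J-mR_J\right)f(\xi_J)+m\,f(\bar y)-f(\bar x).
\end{equation*}
The heart of the argument is to read this right-hand side as a single Jensen functional. The coefficients $P_J-mR_J$ are nonnegative, since $m=\min_J P_J/R_J$ forces $P_J\ge mR_J$; together with the mass $m$ placed on the extra point $\bar y$ they sum to $(1-m)+m=1$, and the decisive identity
\begin{equation*}
\sum_J\left(P_J-mR_J\right)\xi_J+m\,\bar y=\bar x-m\,\bar y+m\,\bar y=\bar x
\end{equation*}
shows that $\bar x$ is precisely the barycenter of this enlarged distribution.

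Applying Proposition \ref{6} to the probability distribution $\{P_J-mR_J\}_J$ augmented by the point $\bar y$ of weight $m$ then bounds the displayed difference below by $\sum_J(P_J-mR_J)f(|\xi_J-\bar x|)+m\,f(|\bar y-\bar x|)$; substituting $|\bar y-\bar x|=|\sum_i q_i\sum_j(r_{ij}-p_{ij})x_{ij}|$ yields the first asserted inequality. The second inequality follows in the same way after factoring out $M$: the weights $MR_J-P_J\ge 0$ (because $M=\max_J P_J/R_J$) together with unit mass on $\bar x$ have total mass $M$, so after normalising by $M$ the barycenter becomes $\bar y$; applying Proposition \ref{6} and multiplying back by $M$ produces the stated bound, in which the cross term $f(|\bar x-\bar y|)$ now carries coefficient $1$.

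I expect no genuine obstacle, only one point of bookkeeping: for the minimizing multi-index one has $P_J=mR_J$, so the corresponding weight vanishes, whereas Proposition \ref{6} is phrased for strictly positive weights. This is harmless, since a point of weight zero contributes nothing to either side of the superquadratic Jensen inequality, so one simply discards such terms (or argues by continuity). Beyond this, the entire proof reduces to verifying the barycentric identity and the nonnegativity of the merged weights, both immediate from the definitions of $m$, $M$ and $\bar x$.
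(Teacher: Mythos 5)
Your proposal is correct and is essentially the paper's own proof: the same expansion of $\mathcal{J}_k(\mathbf{p})-m\,\mathcal{J}_k(\mathbf{r})$ into $\sum_J(P_J-mR_J)f(\xi_J)+mf(\bar{y})-f(\bar{x})$, the same barycentric identity exhibiting $\bar{x}$ as the barycenter of the merged weights $\{P_J-mR_J\}_J$ together with mass $m$ at $\bar{y}$, and the same final application of the discrete superquadratic Jensen inequality (the paper cites Lemma~\ref{1}, you cite Proposition~\ref{6}; yours is arguably the apter reference, since the merged distribution no longer has product form, and your remark on discarding the zero weight at the minimizing multi-index is a point of care the paper skips). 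The one visible difference is immaterial: your argument for the second inequality produces deviations $f\left(\left\vert \xi_J-\bar{y}\right\vert\right)$ measured from the $\mathbf{r}$-barycenter rather than from the $\bar{x}$ printed in the theorem, but that agrees with Theorem~\ref{18} and with the paper's own ``goes likewise'' treatment, so the printed $\bar{x}$ in the second display is evidently a misprint rather than a gap on your side.
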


The following particular case is of interest.

\begin{remark}
Let $\mathbf{p}_{1}=...=\mathbf{p}_{k}=\mathbf{p}$ and $\mathbf{x}_{1}=...=%
\mathbf{x}_{k}=\mathbf{x.}$ In this case we see that Lemma \ref{1} and
Theorem \ref{20} are recovering Proposition \ref{4}, respectively Theorem %
\ref{18} . Also for $k=1$ Lemma \ref{1} and Theorem \ref{20} recapture
Proposition \ref{6}, respectively Theorem \ref{5}.
\end{remark}

According to \cite[Lemma 2.2]{abr04}, if a superquadratic function is also
nonnegative then it is convex. We may conclude that in this particular case
Theorem \ref{20} is a refinement of the result stated in Remark \ref{7}.

Different results are obtained by using the Chebychev functional:

\begin{lemma}
\label{l}Let $f:\left[ 0,\infty \right) \rightarrow \mathbb{R}$. If there
exist real numbers $\tilde{m},$ $\tilde{M}$ such that $\tilde{m}\leq f\left(
\sum_{i=1}^{k}q_{i}x_{ij_{i}}\right) \leq \tilde{M},$ for all $j_{i}\in
1,...,n_{i},$ $i=1,...,k,$ then
\end{lemma}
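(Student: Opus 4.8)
The plan is to recognize the generalized Chebychev functional $\mathcal{T}_k$ as a (weighted) covariance and then run a Gr\"{u}ss-type argument, since the hypothesis only controls $f$ on the points $\sum_{i=1}^k q_i x_{ij_i}$. Writing $Y_{j_1,\ldots,j_k}=\sum_{i=1}^k q_i x_{ij_i}$ and $P_{j_1,\ldots,j_k}=p_{1j_1}\cdots p_{kj_k}$ for brevity, I would first observe that the bracketed factor in the definition of $\mathcal{T}_k$ simplifies, namely $\sum_{i=1}^k q_i\bigl(x_{ij_i}-\sum_{j=1}^{n_i}p_{ij}x_{ij}\bigr)=Y_{j_1,\ldots,j_k}-\bar x$, so that
\[
\mathcal{T}_k=\sum_{j_1,\ldots,j_k=1}^{n_1,\ldots,n_k}P_{j_1,\ldots,j_k}\,\bigl(Y_{j_1,\ldots,j_k}-\bar x\bigr)\,f\bigl(Y_{j_1,\ldots,j_k}\bigr).
\]

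First I would establish the centering identity $\sum P_{j_1,\ldots,j_k}\,Y_{j_1,\ldots,j_k}=\bar x$. This is the one genuinely computational step: by the product structure, $\sum_{j_1,\ldots,j_k}p_{1j_1}\cdots p_{kj_k}\sum_i q_i x_{ij_i}=\sum_i q_i\sum_{j_1,\ldots,j_k}p_{1j_1}\cdots p_{kj_k}x_{ij_i}$, and for each fixed $i$ all the probability factors except $p_{ij_i}$ sum to $1$, leaving $\sum_{j_i}p_{ij_i}x_{ij_i}=\sum_{j=1}^{n_i}p_{ij}x_{ij}$; summing against $q_i$ reproduces $\bar x$. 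Consequently $\sum P\,(Y-\bar x)=0$, and hence for \emph{every} real constant $c$ one has $\mathcal{T}_k=\sum P\,(Y-\bar x)\bigl(f(Y)-c\bigr)$, because subtracting $c$ only contributes $c\sum P\,(Y-\bar x)=0$.

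Then I would choose $c=\tfrac12(\tilde m+\tilde M)$, the midpoint of the bounding interval, so that the hypothesis $\tilde m\le f(Y)\le\tilde M$ gives $\bigl|f(Y)-c\bigr|\le\tfrac12(\tilde M-\tilde m)$ uniformly in the indices. Applying the triangle inequality to the centered representation yields
\[
\bigl|\mathcal{T}_k\bigr|\le\frac{\tilde M-\tilde m}{2}\sum_{j_1,\ldots,j_k=1}^{n_1,\ldots,n_k}p_{1j_1}\cdots p_{kj_k}\left|\sum_{i=1}^k q_i x_{ij_i}-\bar x\right|,
\]
which I expect to be the intended conclusion; note that it requires no superquadraticity, only the definition of $\mathcal{T}_k$ together with the boundedness hypothesis. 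The main obstacle is purely bookkeeping, namely confirming the centering identity under the product-weight structure so that the constant-subtraction trick becomes available; once that is in place the estimate is immediate. As a variant, a pre-Gr\"{u}ss (Cauchy--Schwarz for covariances) route would instead bound $\bigl|\mathcal{T}_k\bigr|$ by $\tfrac12(\tilde M-\tilde m)$ times the square root of the weighted variance of $Y$, but the first-order bound above is the more natural match to the paper's other estimates, which are all expressed through $\bigl|\sum_i q_i x_{ij_i}-\bar x\bigr|$.
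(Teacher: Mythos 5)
Your proposal is correct and follows essentially the same route as the paper's proof: both center $f$ at the midpoint $\tfrac{1}{2}(\tilde m+\tilde M)$, exploit the identity $\sum_{j_1,\ldots,j_k} p_{1j_1}\cdots p_{kj_k}\bigl(\sum_{i=1}^k q_i x_{ij_i}-\bar x\bigr)=0$ to subtract this constant for free, and finish with the triangle inequality to obtain the bound (\ref{9}). Your write-up even fills in the product-structure computation behind the centering identity, which the paper states without proof, and you correctly observe that superquadraticity plays no role here.
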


\begin{equation}
\left\vert \mathcal{T}_{k}\left( f,\mathbf{p}_{1},...,\mathbf{p}_{k},\mathbf{%
q},\mathbf{x}_{1},...,\mathbf{x}_{k}\right) \right\vert \leq \frac{\tilde{M}-%
\tilde{m}}{2}%
\sum_{j_{1},...,j_{k}=1}^{n_{1},...,n_{k}}p_{1j_{1}}...p_{kj_{k}}\left\vert
\sum_{i=1}^{k}q_{i}x_{ij_{i}}-\bar{x}\right\vert .  \label{9}
\end{equation}

This lemma is a discrete version of a result due to P. Cerone and S. S.
Dragomir \cite[Theorem 2]{cer07}. See also \cite[Lemma 5.58]{dra03}.

\begin{proof}
Notice that 
\begin{equation*}
\left\vert f\left( \sum_{i=1}^{k}q_{i}x_{ij_{i}}\right) -\frac{\tilde{M}+%
\tilde{m}}{2}\right\vert \leq \frac{\tilde{M}-\tilde{m}}{2},
\end{equation*}%
for all $j_{i}\in 1,...,n_{i},$ $i=1,...,k.\ $Since%
\begin{equation*}
\sum_{j_{1},...,j_{k}=1}^{n_{1},...,n_{k}}p_{1j_{1}}...p_{kj_{k}}\left(
\sum_{i=1}^{k}q_{i}x_{ij_{i}}-\bar{x}\right) =0,
\end{equation*}%
we have%
\begin{eqnarray*}
&&\mathcal{T}_{k}\left( f,\mathbf{p}_{1},...,\mathbf{p}_{k},\mathbf{q},%
\mathbf{x}_{1},...,\mathbf{x}_{k}\right) \\
&=&\sum_{j_{1},...,j_{k}=1}^{n_{1},...,n_{k}}p_{1j_{1}}...p_{kj_{k}}\left(
\sum_{i=1}^{k}q_{i}x_{ij_{i}}-\bar{x}\right) \left( f\left(
\sum_{i=1}^{k}q_{i}x_{ij_{i}}\right) -\frac{\tilde{M}+\tilde{m}}{2}\right) ,
\end{eqnarray*}%
whence it follows that%
\begin{eqnarray*}
&&\left\vert \mathcal{T}_{k}\left( f,\mathbf{p}_{1},...,\mathbf{p}_{k},%
\mathbf{q},\mathbf{x}_{1},...,\mathbf{x}_{k}\right) \right\vert \\
&\leq
&\sum_{j_{1},...,j_{k}=1}^{n_{1},...,n_{k}}p_{1j_{1}}...p_{kj_{k}}\left\vert
\sum_{i=1}^{k}q_{i}x_{ij_{i}}-\bar{x}\right\vert \left\vert f\left(
\sum_{i=1}^{k}q_{i}x_{ij_{i}}\right) -\frac{\tilde{M}+\tilde{m}}{2}%
\right\vert \  \\
&\leq &\frac{\tilde{M}-\tilde{m}}{2}%
\sum_{j_{1},...,j_{k}=1}^{n_{1},...,n_{k}}p_{1j_{1}}...p_{kj_{k}}\left\vert
\sum_{i=1}^{k}q_{i}x_{ij_{i}}-\bar{x}\right\vert .
\end{eqnarray*}
\end{proof}

We close this subsection with a proposition that gives us an upper bound for
the Jensen functional under the superquadraticity condition, via the above
result on the Chebyshev functional.

\begin{proposition}
\label{13}Let $f:\left[ 0,\infty \right) \rightarrow \mathbb{R}$ be a
superquadratic function. If for $C(x)$ there exist real numbers $\tilde{m},$ 
$\tilde{M}$ such that $\tilde{m}\leq C\left(
\sum_{i=1}^{k}q_{i}x_{ij_{i}}\right) \leq \tilde{M},$ for all $j_{i}\in
1,...,n_{i},$ $i=1,...,k,$ then we have: 
\begin{eqnarray*}
&&\mathcal{J}_{k}\left( f,\mathbf{p}_{1},...,\mathbf{p}_{k},\mathbf{q},%
\mathbf{x}_{1},...,\mathbf{x}_{k}\right) \\
&\leq
&\sum_{j_{1},...,j_{k}=1}^{n_{1},...,n_{k}}p_{1j_{1}}...p_{kj_{k}}\left( 
\frac{\tilde{M}-\tilde{m}}{2}\left\vert \sum_{i=1}^{k}q_{i}x_{ij_{i}}-\bar{x}%
\right\vert -f\left( \left\vert \sum_{i=1}^{k}q_{i}x_{ij_{i}}-\bar{x}%
\right\vert \right) \right) .
\end{eqnarray*}
\end{proposition}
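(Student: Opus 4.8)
The plan is to run the superquadraticity inequality in the \emph{reverse} direction from the way it is used in Lemma \ref{1}. Write $z_{j_1,\ldots,j_k}=\sum_{i=1}^{k}q_ix_{ij_i}$, and note that both $z_{j_1,\ldots,j_k}$ and $\bar x$ lie in $I$, being convex combinations of points of the convex interval $I\ni 0$. Applying (\ref{SQ}) with $x=z_{j_1,\ldots,j_k}$ and $y=\bar x$ (rather than the other way around) and rearranging gives, for every index tuple,
\begin{equation*}
f\!\left(z_{j_1,\ldots,j_k}\right)-f\!\left(\bar x\right)\leq C\!\left(z_{j_1,\ldots,j_k}\right)\left(z_{j_1,\ldots,j_k}-\bar x\right)-f\!\left(\left\vert z_{j_1,\ldots,j_k}-\bar x\right\vert\right).
\end{equation*}

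Next I would multiply by $p_{1j_1}\cdots p_{kj_k}$ and sum over all tuples. Since $\sum p_{1j_1}\cdots p_{kj_k}=1$, the left-hand side collapses exactly to $\mathcal{J}_{k}\left(f,\ldots\right)$. The crux of the argument, and the step I expect to require the most care, is to recognize that
\begin{equation*}
\sum_{j_1,\ldots,j_k=1}^{n_1,\ldots,n_k}p_{1j_1}\cdots p_{kj_k}\,C\!\left(z_{j_1,\ldots,j_k}\right)\left(z_{j_1,\ldots,j_k}-\bar x\right)=\mathcal{T}_{k}\left(C,\ldots\right),
\end{equation*}
i.e.\ the generalized Chebychev functional evaluated at $C$ in place of $f$. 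This follows from the identity $z_{j_1,\ldots,j_k}-\bar x=\sum_{i=1}^{k}q_i\left(x_{ij_i}-\sum_{j=1}^{n_i}p_{ij}x_{ij}\right)$, which is precisely the weight sitting inside the definition of $\mathcal{T}_{k}$.

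Finally I would invoke Lemma \ref{l} with $C$ in the role of the bounded function: the hypothesis $\tilde m\leq C\!\left(z_{j_1,\ldots,j_k}\right)\leq\tilde M$ is exactly the boundedness needed, and the proof of Lemma \ref{l} uses only pointwise bounds on the function values, hence applies with $C$ substituted for $f$. This yields $\mathcal{T}_{k}\left(C,\ldots\right)\leq\left\vert\mathcal{T}_{k}\left(C,\ldots\right)\right\vert\leq\frac{\tilde M-\tilde m}{2}\sum p_{1j_1}\cdots p_{kj_k}\left\vert z_{j_1,\ldots,j_k}-\bar x\right\vert$. Substituting this estimate into the summed inequality and merging the two sums into one produces exactly the asserted bound; no further delicate estimation is required, the only genuine checks being the direction of the inequality at the first step and the correct matching of the Chebychev weight.
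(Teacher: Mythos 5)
Your proof is correct, and it follows the same route as the paper: the paper's own proof consists precisely of combining inequality (\ref{16}) with Lemma \ref{l} applied to $C$. The one genuine difference is that the paper does not prove (\ref{16}) at all --- it cites it from \cite[Theorem 3]{mitCIA10} --- whereas you derive it on the spot. Your application of (\ref{SQ}) with $x=\sum_{i=1}^{k}q_{i}x_{ij_{i}}$ and $y=\bar{x}$ is in the correct direction (after rearrangement the sign of the linear term flips as you state), and the identity $\sum_{i=1}^{k}q_{i}x_{ij_{i}}-\bar{x}=\sum_{i=1}^{k}q_{i}\bigl(x_{ij_{i}}-\sum_{j=1}^{n_{i}}p_{ij}x_{ij}\bigr)$ correctly identifies the summed linear term as $\mathcal{T}_{k}\left(C,\mathbf{p}_{1},...,\mathbf{p}_{k},\mathbf{q},\mathbf{x}_{1},...,\mathbf{x}_{k}\right)$, so your two steps together constitute a proof of (\ref{16}). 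Your remark that Lemma \ref{l} requires no superquadraticity and applies verbatim with $C$ in place of $f$ is also right: its proof uses only the pointwise bounds and the vanishing of the weighted sum of deviations. So your argument buys self-containedness --- it is the paper's proof with the external citation unpacked into a short computation --- at no cost in length or generality.
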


\begin{proof}
We apply 
\begin{eqnarray}
\mathcal{J}_{k}\left( f,\mathbf{p}_{1},...,\mathbf{p}_{k},\mathbf{q},\mathbf{%
x}_{1},...,\mathbf{x}_{k}\right) &\leq &\mathcal{T}_{k}\left( C,\mathbf{p}%
_{1},...,\mathbf{p}_{k},\mathbf{q},\mathbf{x}_{1},...,\mathbf{x}_{k}\right) 
\notag \\
&&-\sum_{j_{1},...,j_{k}=1}^{n_{1},...,n_{k}}p_{1j_{1}}...p_{kj_{k}}f\left(
\left\vert \sum_{i=1}^{k}q_{i}x_{ij_{i}}-\bar{x}\right\vert \right) .
\label{16}
\end{eqnarray}%
and the inequality (\ref{9}) in order to get the claimed result.

The proof of (\ref{16}) can be found in \cite[Theorem 3]{mitCIA10}.
\end{proof}

This proposition extends a result due to S. Abramovich and S. S. Dragomir 
\cite[Theorem 9]{abr09}. The inequality (\ref{16}) is establishing a
connection between the Jensen functional and the Chebychev functional and is
interesting in itself.

\subsection{The integral case}

In what follows we shall concentrate on the integral analogue of some of the
results from the previous section. Let $p_{i}\left( x\right) \mathrm{d}x$
and $r_{i}\left( x\right) \mathrm{d}x,$ $i=1,...,k$ be absolutely continuous
measures, where $p_{i},\ r_{i}:\left[ a,b\right] \subset \left( 0,\infty
\right) \rightarrow \left( 0,\infty \right) $ are such that $%
\int_{a}^{b}p_{i}\left( x\right) \mathrm{d}x=1$, $\int_{a}^{b}r_{i}\left(
x\right) \mathrm{d}x=1.$ We also consider $\mathbf{q}=\left(
q_{1},q_{2},...,q_{k}\right) ,$ $q_{i}>0$ with $\sum_{i=1}^{k}q_{i}=1.$ We
define\ 
\begin{equation*}
\mathcal{J}_{k}\left( f,p_{1},...,p_{k},\mathbf{q}\right) :=\int_{\left[ a,b%
\right] ^{k}}f\left( \sum_{i=1}^{k}q_{i}x_{i}\right) \prod_{i=1}^{k}\left(
p_{i}\left( x_{i}\right) \mathrm{d}x_{i}\right) -f\left(
\sum_{i=1}^{k}q_{i}\int_{a}^{b}xp_{i}\left( x\right) \mathrm{d}x\right)
\end{equation*}%
and%
\begin{equation*}
\mathcal{T}_{k}\left( f,p_{1},...,p_{k},\mathbf{q}\right) =\int_{\left[ a,b%
\right] ^{k}}\sum_{i=1}^{k}q_{i}\left( x_{i}-\int_{a}^{b}xp_{i}\left(
x\right) \mathrm{d}x\right) f\left( \sum_{i=1}^{k}q_{i}x_{i}\right)
\prod_{i=1}^{k}\left( p_{i}\left( x_{i}\right) \mathrm{d}x_{i}\right)
\end{equation*}%
for all positive integers $k$.

Before we prove the main result, we need the following lemma providing an
inequality that is interesting in itself as well. For the case of
superquadratic nonnegative functions (hence convex) this result is a
refinement of Jensen's inequality.

\begin{lemma}[the integral analogue of Lemma \protect\ref{1}]
\label{3}Assume that $f$ is superquadratic. Then 
\begin{equation}
\mathcal{J}_{k}\left( f,p_{1},...,p_{k},\mathbf{q}\right) \geq \int_{\left[
a,b\right] ^{k}}f\left( \left\vert \sum_{i=1}^{k}q_{i}x_{i}-\bar{x}%
\right\vert \right) \prod_{i=1}^{k}\left( p_{i}\left( x_{i}\right) \mathrm{d}%
x_{i}\right) ,  \label{in1}
\end{equation}%
where 
\begin{equation*}
\bar{x}=\sum_{i=1}^{k}q_{i}\int_{a}^{b}xp_{i}\left( x\right) \mathrm{d}x
\end{equation*}%
(we will keep this notation in this subsection).
\end{lemma}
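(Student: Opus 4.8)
The plan is to recognize inequality (\ref{in1}) as a direct instance of the integral Jensen-type inequality of Theorem \ref{th_integral}, applied on the product probability space $\left( \left[ a,b\right] ^{k},\mu \right) $ carrying the product measure $\mathrm{d}\mu =\prod_{i=1}^{k}p_{i}\left( x_{i}\right) \mathrm{d}x_{i}$. This is the continuous counterpart of the mechanism behind Lemma \ref{1}: there the superquadraticity inequality was summed against the discrete product weights, and here summation is simply replaced by integration against the product density.

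Concretely, I would set $\varphi \left( x_{1},...,x_{k}\right) =\sum_{i=1}^{k}q_{i}x_{i}$ and first verify that $\mu $ is indeed a probability measure and that $\varphi $ meets the hypotheses of Theorem \ref{th_integral}. Since each $p_{i}$ is a density on $\left[ a,b\right] $ with $\int_{a}^{b}p_{i}=1$, the product $\mathrm{d}\mu $ has total mass $\prod_{i=1}^{k}\int_{a}^{b}p_{i}\left( x_{i}\right) \mathrm{d}x_{i}=1$. Because $\left[ a,b\right] \subset \left( 0,\infty \right) $ and the $q_{i}$ are positive, $\varphi $ is nonnegative (in fact it takes values in $\left[ a,b\right] $) and bounded, hence $\mu $-integrable; these are exactly the conditions required for Theorem \ref{th_integral} to apply.

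The single computation to carry out is that the barycenter of $\varphi $ equals $\bar{x}$. Using Fubini's theorem together with the normalizations $\int_{a}^{b}p_{j}=1$, each term $\int_{\left[ a,b\right] ^{k}}x_{i}\prod_{j=1}^{k}p_{j}\left( x_{j}\right) \mathrm{d}x_{j}$ collapses to $\int_{a}^{b}xp_{i}\left( x\right) \mathrm{d}x$, so that $\int \varphi \,\mathrm{d}\mu =\sum_{i=1}^{k}q_{i}\int_{a}^{b}xp_{i}\left( x\right) \mathrm{d}x=\bar{x}$. With this identification, Theorem \ref{th_integral} gives directly
\begin{equation*}
f\left( \bar{x}\right) \leq \int_{\left[ a,b\right] ^{k}}f\left( \sum_{i=1}^{k}q_{i}x_{i}\right) \prod_{i=1}^{k}\left( p_{i}\left( x_{i}\right) \mathrm{d}x_{i}\right) -\int_{\left[ a,b\right] ^{k}}f\left( \left\vert \sum_{i=1}^{k}q_{i}x_{i}-\bar{x}\right\vert \right) \prod_{i=1}^{k}\left( p_{i}\left( x_{i}\right) \mathrm{d}x_{i}\right) ,
\end{equation*}
and transposing $f\left( \bar{x}\right) $ to the left, the first two terms assemble into $\mathcal{J}_{k}\left( f,p_{1},...,p_{k},\mathbf{q}\right) $ by the definition of the integral functional, which is precisely (\ref{in1}).

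Since the whole argument is an invocation of the already-established Theorem \ref{th_integral}, I do not anticipate a genuine obstacle; the only point demanding care is the bookkeeping that $\mathrm{d}\mu $ is a probability measure and that $\varphi $ satisfies the nonnegativity requirement, after which the barycenter identity $\int \varphi \,\mathrm{d}\mu =\bar{x}$ closes the matter immediately. Should one prefer not to quote the integral theorem as a black box, an equivalent route is to integrate the defining inequality (\ref{SQ}) with $y=\varphi \left( x_{1},...,x_{k}\right) $ and $x=\bar{x}$ against $\mathrm{d}\mu $, exactly mirroring the proof of Lemma \ref{1}; the linear term $C\left( \bar{x}\right) \left( \varphi -\bar{x}\right) $ then integrates to zero precisely because $\int \varphi \,\mathrm{d}\mu =\bar{x}$.
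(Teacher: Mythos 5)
Your proof is correct, but your primary route is not the one the paper takes. The paper's entire proof of Lemma \ref{3} is the sentence ``Straightforward from the definition of superquadratic functions'': that is, put $x=\bar{x}$ and $y=\sum_{i=1}^{k}q_{i}x_{i}$ in (\ref{SQ}) and integrate against $\prod_{i=1}^{k}p_{i}\left( x_{i}\right) \mathrm{d}x_{i}$, the linear term $C\left( \bar{x}\right) \left( \sum_{i=1}^{k}q_{i}x_{i}-\bar{x}\right) $ integrating to zero by the barycenter identity. That is exactly the ``equivalent route'' you sketch in your closing paragraph, so you do also cover the paper's argument. Your main argument instead reduces the lemma to the quoted characterization (Theorem \ref{th_integral}) applied to the product probability measure on $\left[ a,b\right] ^{k}$ with $\varphi \left( x_{1},...,x_{k}\right) =\sum_{i=1}^{k}q_{i}x_{i}$; the verifications you list (unit total mass, nonnegativity and boundedness of $\varphi $, and $\int \varphi \,\mathrm{d}\mu =\bar{x}$ via Fubini) are the right ones and they suffice. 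Notably, this product-measure reduction is precisely the observation the authors credit to Gord Sinnamon in their introduction and deliberately set aside, preferring direct discrete and integral arguments. What your route buys: the lemma becomes a formal corollary of an already-established theorem, with the integrability of $f\left( \varphi \right) $ and $f\left( \left\vert \varphi -\bar{x}\right\vert \right) $ inherited from that theorem's hypotheses rather than left implicit. What the paper's route buys: it is self-contained, uses only the easy direction of superquadraticity (not the full ``if and only if''), and it is the same one-line recipe that proves Lemma \ref{1} and propagates into the proof of Theorem \ref{th1}, keeping the whole section uniform.
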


\begin{proof}
Straightforward from the definition of superquadratic functions.
\end{proof}

\begin{example}
A particularly interesting case is\ pointed out by assuming, for simplicity,
that $p_{i}\left( x\right) \mathrm{d}x=\mathrm{d}x/\left( b-a\right) ,$ $%
i=1,...,k,$ when 
\begin{eqnarray*}
&&\frac{1}{\left( b-a\right) ^{k}}\int_{\left[ a,b\right] ^{k}}f\left(
\sum_{i=1}^{k}q_{i}x_{i}\right) \prod_{i=1}^{k}\mathrm{d}x_{i}-f\left( \frac{%
a+b}{2}\right) \\
&\geq &\frac{1}{\left( b-a\right) ^{k}}\int_{\left[ a,b\right] ^{k}}f\left(
\left\vert \sum_{i=1}^{k}q_{i}x_{i}-\frac{a+b}{2}\right\vert \right)
\prod_{i=1}^{k}\mathrm{d}x_{i}.
\end{eqnarray*}
\end{example}

\begin{remark}
\label{8}For the case $k=1$ the lemma gives us the following inequality 
\begin{equation*}
\int_{a}^{b}f\left( x\right) p\left( x\right) \mathrm{d}x\geq f\left(
\int_{a}^{b}xp\left( x\right) \mathrm{d}x\right) +\int_{a}^{b}f\left(
\left\vert x-\int_{a}^{b}xp\left( x\right) \mathrm{d}x\right\vert \right)
p\left( x\right) \mathrm{d}x
\end{equation*}%
for every $f$ superquadratic. This is the integral counterpart of
Proposition \ref{6}, an example of \cite[Theorem 2.3]{abr04}.
\end{remark}

We derive the following result.

\begin{theorem}[the integral analogue of Theorem \protect\ref{20}]
\label{th1} We denote%
\begin{equation*}
m=\inf_{t,s\in \left[ a,b\right] ;s\neq t}\left\{ \frac{\int_{\left[ t,s%
\right] ^{k}}\prod_{i=1}^{k}p_{i}\left( x_{i}\right) \mathrm{d}x_{i}}{\int_{%
\left[ t,s\right] ^{k}}\prod_{i=1}^{k}r_{i}\left( x_{i}\right) \mathrm{d}%
x_{i}}\right\}
\end{equation*}%
and%
\begin{equation*}
M=\sup_{t,s\in \left[ a,b\right] ;s\neq t}\left\{ \frac{\int_{\left[ t,s%
\right] ^{k}}\prod_{i=1}^{k}p_{i}\left( x_{i}\right) \mathrm{d}x_{i}}{\int_{%
\left[ t,s\right] ^{k}}\prod_{i=1}^{k}r_{i}\left( x_{i}\right) \mathrm{d}%
x_{i}}\right\} .
\end{equation*}%
If $f$ is superquadratic then 
\begin{eqnarray*}
&&\mathcal{J}_{k}\left( f,p_{1},...,p_{k},\mathbf{q}\right) -m\mathcal{J}%
_{k}\left( f,r_{1},...,r_{k},\mathbf{q}\right) \\
&\geq &mf\left( \left\vert \sum_{i=1}^{k}q_{i}\int_{a}^{b}x\left(
p_{i}\left( x\right) -r_{i}\left( x\right) \right) \mathrm{d}x\right\vert
\right) \\
&&+\int_{\left[ a,b\right] ^{k}}f\left( \left\vert \sum_{i=1}^{k}q_{i}x_{i}-%
\bar{x}\right\vert \right) \prod_{i=1}^{k}\left( \left( p_{i}\left(
x_{i}\right) -mr_{i}\left( x_{i}\right) \right) \mathrm{d}x_{i}\right)
\end{eqnarray*}%
and%
\begin{eqnarray*}
&&M\mathcal{J}_{k}\left( f,r_{1},...,r_{k},\mathbf{q}\right) -\mathcal{J}%
_{k}\left( f,p_{1},...,p_{k},\mathbf{q}\right) \\
&\geq &f\left( \left\vert \sum_{i=1}^{k}q_{i}\int_{a}^{b}x\left( p_{i}\left(
x\right) -r_{i}\left( x\right) \right) \mathrm{d}x\right\vert \right) \\
&&+\int_{\left[ a,b\right] ^{k}}f\left( \left\vert \sum_{i=1}^{k}q_{i}x_{i}-%
\bar{x}\right\vert \right) \prod_{i=1}^{k}\left( \left( Mr_{i}\left(
x_{i}\right) -p_{i}\left( x_{i}\right) \right) \mathrm{d}x_{i}\right) .
\end{eqnarray*}
\end{theorem}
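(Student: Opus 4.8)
The plan is to reproduce, in the integral setting, the mechanism behind the proof of Theorem \ref{20}: I would recast the left-hand side as a genuine Jensen functional attached to a single auxiliary probability measure whose barycenter is $\bar{x}$, and then invoke the measure-theoretic superquadratic Jensen inequality, Theorem \ref{th_integral}. I will carry out the first inequality in full; the second is symmetric, obtained by interchanging the two density families, replacing $m$ by $1$ and the outer factor by $M$.

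First I would unfold both functionals from the integral definition and use $\int_{[a,b]^k}\prod_i p_i(x_i)\,\mathrm{d}x_i=\int_{[a,b]^k}\prod_i r_i(x_i)\,\mathrm{d}x_i=1$. Setting $\bar{x}_r=\sum_{i=1}^k q_i\int_a^b x\,r_i(x)\,\mathrm{d}x$, this yields
\[
\mathcal{J}_k(f,p_1,\dots,p_k,\mathbf{q})-m\,\mathcal{J}_k(f,r_1,\dots,r_k,\mathbf{q})=\int_{[a,b]^k} f\!\left(\sum_{i=1}^k q_ix_i\right)\Big(\prod_{i=1}^k p_i(x_i)-m\prod_{i=1}^k r_i(x_i)\Big)\prod_{i=1}^k\mathrm{d}x_i+m\,f(\bar{x}_r)-f(\bar{x}).
\]
The key structural identity, the continuous counterpart of the barycenter identity used for Theorem \ref{20}, is
\[
\bar{x}=\int_{[a,b]^k}\Big(\sum_{i=1}^k q_ix_i\Big)\Big(\prod_{i=1}^k p_i(x_i)-m\prod_{i=1}^k r_i(x_i)\Big)\prod_{i=1}^k\mathrm{d}x_i+m\,\bar{x}_r,
\]
which follows from $\int_{[a,b]^k}(\sum_i q_ix_i)\prod_i p_i(x_i)\,\mathrm{d}x_i=\bar{x}$ and the analogous equality with the $r_i$ producing $\bar{x}_r$.

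Next I would read these two identities as the statement that $\bar{x}$ is the barycenter of the measure $\mu$ living on $[a,b]^k\sqcup\{\ast\}$ with $\mathrm{d}\mu=(\prod_i p_i-m\prod_i r_i)\,\mathrm{d}x$ on the cube and mass $m$ at the extra point $\ast$, the test map being $\varphi=\sum_i q_ix_i$ on the cube and $\varphi(\ast)=\bar{x}_r$; its total mass is $(1-m)+m=1$. Applying Theorem \ref{th_integral} to $(\mu,\varphi)$ — note $\varphi\ge 0$ since $[a,b]\subset(0,\infty)$ and $q_i>0$ — and rearranging produces exactly the claimed lower bound, once one notes $|\bar{x}_r-\bar{x}|=\big|\sum_i q_i\int_a^b x(p_i(x)-r_i(x))\,\mathrm{d}x\big|$. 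Here the product notation $\prod_i((p_i-mr_i)\mathrm{d}x_i)$ in the statement is to be read as the signed combination $\prod_i p_i\,\mathrm{d}x-m\prod_i r_i\,\mathrm{d}x$, exactly as in the discrete Theorem \ref{20}.

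The step I expect to be the main obstacle is the one point where the integral case genuinely departs from the discrete one: Theorem \ref{th_integral} applies only to an \emph{honest} probability measure, so I must know that the weight $\prod_i p_i(x_i)-m\prod_i r_i(x_i)$ is nonnegative almost everywhere. This is precisely what the definition of $m$ through cubes is meant to supply: since $m\le\int_{[t,s]^k}\prod_i p_i\big/\int_{[t,s]^k}\prod_i r_i=\prod_i\big(\int_t^s p_i/\int_t^s r_i\big)$ for every $[t,s]$, shrinking the cube and using the Lebesgue differentiation theorem forces $\prod_i p_i\ge m\prod_i r_i$ at almost every diagonal point, and this domination must then be propagated to almost every point of $[a,b]^k$ in order to legitimise $\mu$ (and, for the second inequality, $M\prod_i r_i-\prod_i p_i\ge 0$). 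Securing this nonnegativity rigorously — not the formal rearrangement, which is routine — is the crux; once it is in hand, the second inequality follows from the symmetric construction, using the auxiliary measure with density $\tfrac1M(M\prod_i r_i-\prod_i p_i)$ and mass $\tfrac1M$ at the point carrying $\bar{x}$, applying Theorem \ref{th_integral} and multiplying through by $M$.
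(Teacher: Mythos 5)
Your mechanism is exactly the paper's: the paper likewise expands $\mathcal{J}_k\left( f,p_{1},...,p_{k},\mathbf{q}\right) -m\mathcal{J}_k\left( f,r_{1},...,r_{k},\mathbf{q}\right)$, uses the barycenter identity, and applies the superquadratic Jensen inequality to the mixture consisting of the weight $\prod_i p_i(x_i)-m\prod_i r_i(x_i)$ on $[a,b]^k$ plus an atom of mass $m$ at $\bar{x}_r$; the paper compresses this into the words ``Lemma \ref{3} implies'', and your formulation via Theorem \ref{th_integral} on $[a,b]^k\sqcup\{\ast\}$ is the same step made explicit. The genuine problem is the one you flagged and then left open: that application requires $\mu$ to be an honest nonnegative measure, i.e. $\prod_i p_i(x_i)\geq m\prod_i r_i(x_i)$ for a.e. $(x_1,\dots,x_k)\in[a,b]^k$. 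Your plan to secure this --- Lebesgue differentiation along the shrinking cubes $[t,s]^k$, then ``propagation'' off the diagonal --- cannot be carried out, because with the cube-based $m$ of the statement the inequality is genuinely false in general. Take $k=2$, $[a,b]=[1,2]$, $r_1=r_2\equiv 1$, $p_1(x)=\frac{5}{2}-x$, $p_2(x)=x-\frac{1}{2}$: every cube ratio equals $(\frac{5}{2}-c)(c-\frac{1}{2})$ with $c=\frac{s+t}{2}\in (1,2)$, so $m=\frac{3}{4}$, while near the corner $(2,1)$ one has $p_1(x_1)p_2(x_2)$ close to $\frac{1}{4}<\frac{3}{4}$. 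There your $\mu$ has negative density on a set of positive measure, and Theorem \ref{th_integral} does not apply. Differentiation of diagonal cubes controls the ratio only at a.e. point of the diagonal, a null set when $k\geq 2$; nothing forces the bound off the diagonal, as the example shows.

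You should know that this defect is inherited from the paper itself, whose proof needs the same nonnegativity and never verifies it; the discrete Theorem \ref{20} is immune only because there $m$ is a minimum over \emph{all} multi-indices $(j_1,\dots,j_k)$, which is precisely the pointwise bound required. The faithful integral analogue of that minimum is not an infimum over diagonal cubes but over all boxes $\prod_{i=1}^k[t_i,s_i]$ --- equivalently $m=\prod_{i=1}^k\inf_{t_i\neq s_i}\left(\int_{t_i}^{s_i}p_i/\int_{t_i}^{s_i}r_i\right)$, or directly the essential infimum of $\prod_i p_i(x_i)/\prod_i r_i(x_i)$ on $[a,b]^k$. With $m$ and $M$ so defined, coordinatewise Lebesgue differentiation does give the a.e. domination, and your mixture argument (equally the paper's) becomes complete. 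One further caveat: your symmetric construction for the second inequality (density $\frac{1}{M}(M\prod_i r_i-\prod_i p_i)$ plus mass $\frac{1}{M}$ at the point carrying $\bar{x}$) has barycenter $\bar{x}_r=\sum_i q_i\int_a^b x\,r_i(x)\,\mathrm{d}x$, so it yields $f\left(\left\vert \sum_i q_ix_i-\bar{x}_r\right\vert\right)$ under the integral, as in Theorem \ref{18}, not $f\left(\left\vert \sum_i q_ix_i-\bar{x}\right\vert\right)$ as printed; the printed version is in fact violated already by $f(x)=x^2$ whenever $\bar{x}\neq\bar{x}_r$, so what your construction proves (and what must have been intended) is the version with $\bar{x}_r$.
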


\begin{proof}
We will prove the first inequality. Lemma \ref{3} implies that 
\begin{eqnarray*}
&&\mathcal{J}_{k}\left( f,p_{1},...,p_{k},\mathbf{q}\right) -m\mathcal{J}%
_{k}\left( f,r_{1},...,r_{k},\mathbf{q}\right) \\
&=&\int_{\left[ a,b\right] ^{k}}f\left( \sum_{i=1}^{k}q_{i}x_{i}\right)
\prod_{i=1}^{k}\left( \left( p_{i}\left( x_{i}\right) -mr_{i}\left(
x_{i}\right) \right) \mathrm{d}x_{i}\right) \\
&&+mf\left( \sum_{i=1}^{k}q_{i}\int_{a}^{b}xr_{i}\left( x\right) \mathrm{d}%
x\right) -f\left( \sum_{i=1}^{k}q_{i}\int_{a}^{b}xp_{i}\left( x\right) 
\mathrm{d}x\right)
\end{eqnarray*}%
\begin{eqnarray*}
&\geq &\int_{\left[ a,b\right] ^{k}}f\left( \left\vert
\sum_{i=1}^{k}q_{i}x_{i}-\bar{x}\right\vert \right) \prod_{i=1}^{k}\left(
\left( p_{i}\left( x_{i}\right) -mr_{i}\left( x_{i}\right) \right) \mathrm{d}%
x_{i}\right) \\
&&+mf\left( \left\vert \sum_{i=1}^{k}q_{i}\int_{a}^{b}x\left( p_{i}\left(
x\right) -r_{i}\left( x\right) \right) \mathrm{d}x\right\vert \right) .
\end{eqnarray*}%
The proof of the second inequality goes likewise and has been omitted.
\end{proof}

Now we turn our attention to the Chebychev functional. By an essentially
similar method as in the discrete case already discussed above, one can
prove the following lemma.

\begin{lemma}
We consider a superquadratic function $f:\left[ 0,\infty \right) \rightarrow 
\mathbb{R}$. If there exist real numbers $\tilde{m},$ $\tilde{M}$ such that $%
\tilde{m}\leq f(x)\leq \tilde{M},$ for all $x\geq 0,$ then we get
\end{lemma}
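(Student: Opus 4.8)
The plan is first to pin down the conclusion, which is cut off after ``then we get''. By exact analogy with the discrete inequality (\ref{9}) of Lemma \ref{l}, the statement I expect is
\begin{equation*}
\left\vert \mathcal{T}_{k}\left( f,p_{1},...,p_{k},\mathbf{q}\right) \right\vert \leq \frac{\tilde{M}-\tilde{m}}{2}\int_{\left[ a,b\right] ^{k}}\left\vert \sum_{i=1}^{k}q_{i}x_{i}-\bar{x}\right\vert \prod_{i=1}^{k}\left( p_{i}\left( x_{i}\right) \mathrm{d}x_{i}\right) .
\end{equation*}
The proof will run parallel to the discrete argument, replacing the multiple sum $\sum_{j_{1},...,j_{k}}p_{1j_{1}}...p_{kj_{k}}$ by integration against the product measure $\prod_{i=1}^{k}p_{i}(x_{i})\,\mathrm{d}x_{i}$ over $[a,b]^{k}$. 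Note that superquadraticity of $f$ is not actually needed for the estimate itself; only the two-sided bound $\tilde{m}\leq f\leq \tilde{M}$ enters, exactly as in Lemma \ref{l}.

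First I would record the elementary reformulation of the hypothesis: $\tilde{m}\leq f(x)\leq \tilde{M}$ for all $x\geq 0$ is equivalent to
\begin{equation*}
\left\vert f\left( \sum_{i=1}^{k}q_{i}x_{i}\right) -\frac{\tilde{M}+\tilde{m}}{2}\right\vert \leq \frac{\tilde{M}-\tilde{m}}{2}
\end{equation*}
at every point $(x_{1},...,x_{k})\in [a,b]^{k}$, since $\sum_{i=1}^{k}q_{i}x_{i}\geq 0$ lies in the domain of $f$. The key step is the vanishing-moment identity
\begin{equation*}
\int_{\left[ a,b\right] ^{k}}\left( \sum_{i=1}^{k}q_{i}x_{i}-\bar{x}\right) \prod_{i=1}^{k}\left( p_{i}\left( x_{i}\right) \mathrm{d}x_{i}\right) =0,
\end{equation*}
which I would verify by Fubini's theorem: each $p_{i}$ is a probability density, so integrating out the variables $x_{j}$ with $j\neq i$ leaves $\int_{a}^{b}xp_{i}(x)\,\mathrm{d}x$ in the $i$-th term, and weighting by $q_{i}$ reproduces exactly $\bar{x}$, cancelling the constant contribution (for which $\prod_{i}\int_{a}^{b}p_{i}=1$).

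Granting this identity, I may subtract the constant $\tfrac{\tilde{M}+\tilde{m}}{2}$ times the zero-mean deviation inside $\mathcal{T}_{k}$ without changing its value, obtaining
\begin{equation*}
\mathcal{T}_{k}\left( f,p_{1},...,p_{k},\mathbf{q}\right) =\int_{\left[ a,b\right] ^{k}}\left( \sum_{i=1}^{k}q_{i}x_{i}-\bar{x}\right) \left( f\left( \sum_{i=1}^{k}q_{i}x_{i}\right) -\frac{\tilde{M}+\tilde{m}}{2}\right) \prod_{i=1}^{k}\left( p_{i}\left( x_{i}\right) \mathrm{d}x_{i}\right) .
\end{equation*}
To finish, I would take absolute values, pass the modulus inside the integral by the triangle inequality, and bound the factor involving $f$ by $\tfrac{\tilde{M}-\tilde{m}}{2}$ via the first step, which yields the claimed estimate. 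I expect no genuine obstacle: the only point requiring care is the Fubini interchange in the vanishing-moment identity, and this is immediate because the densities $p_{i}$ are nonnegative and the box $[a,b]^{k}$ is bounded, so integrability is automatic. The whole argument is a faithful transcription of the proof of Lemma \ref{l}, with summation replaced by integration against the product measure.
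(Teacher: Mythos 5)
Your proposal is correct: you have identified the intended conclusion (the displayed inequality immediately following the lemma environment) and your argument is exactly the one the paper intends, since the paper gives no separate proof but states the lemma follows ``by an essentially similar method as in the discrete case,'' i.e., by transcribing the proof of Lemma \ref{l} with the product density $\prod_{i=1}^{k}p_{i}(x_{i})\,\mathrm{d}x_{i}$ in place of the weights $p_{1j_{1}}...p_{kj_{k}}$. Your observation that superquadraticity plays no role in the estimate is also accurate and consistent with the discrete case.
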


\begin{equation*}
\left\vert \mathcal{T}_{k}\left( f,p_{1},...,p_{k},\mathbf{q}\right)
\right\vert \leq \frac{\tilde{M}-\tilde{m}}{2}\int_{\left[ a,b\right]
^{k}}\left\vert \sum_{i=1}^{k}q_{i}x_{i}-\bar{x}\right\vert
\prod_{i=1}^{k}\left( p_{i}\left( x_{i}\right) \mathrm{d}x_{i}\right) .
\end{equation*}%
This lemma can be used to point out our last result.

\begin{proposition}[the integral analogue of Proposition \protect\ref{13}]
Using the above notations, we also consider a superquadratic function $f:%
\left[ 0,\infty \right) \rightarrow \mathbb{R}$. If there exist real numbers 
$\tilde{m},$ $\tilde{M}$ such that $\tilde{m}\leq C(x)\leq \tilde{M},$ for
all $x\geq 0,$ then we have: 
\begin{eqnarray*}
&&\mathcal{J}_{k}\left( f,p_{1},...,p_{k},\mathbf{q}\right) \\
&\leq &\int_{\left[ a,b\right] ^{k}}\left( \frac{\tilde{M}-\tilde{m}}{2}%
\left\vert \sum_{i=1}^{k}q_{i}x_{i}-\bar{x}\right\vert -f\left( \left\vert
\sum_{i=1}^{k}q_{i}x_{i}-\bar{x}\right\vert \right) \right)
\prod_{i=1}^{k}\left( p_{i}\left( x_{i}\right) \mathrm{d}x_{i}\right) .
\end{eqnarray*}
\end{proposition}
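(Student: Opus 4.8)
The plan is to imitate the proof of the discrete Proposition \ref{13}: I would first establish the integral analogue of the connecting inequality (\ref{16}) between the Jensen and the Chebychev functionals, and then combine it with the Chebychev estimate furnished by the lemma immediately above.

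The core step is the integral version of (\ref{16}),
\[
\mathcal{J}_k\left(f,p_1,\ldots,p_k,\mathbf{q}\right)\leq\mathcal{T}_k\left(C,p_1,\ldots,p_k,\mathbf{q}\right)-\int_{\left[a,b\right]^k}f\left(\left\vert\sum_{i=1}^kq_ix_i-\bar{x}\right\vert\right)\prod_{i=1}^k\left(p_i\left(x_i\right)\mathrm{d}x_i\right).
\]
To obtain it, I would apply the defining inequality (\ref{SQ}) with $x=\sum_{i=1}^kq_ix_i$ and $y=\bar{x}$; after rearranging, this reads
\[
f\left(\sum_{i=1}^kq_ix_i\right)-f\left(\bar{x}\right)\leq C\left(\sum_{i=1}^kq_ix_i\right)\left(\sum_{i=1}^kq_ix_i-\bar{x}\right)-f\left(\left\vert\sum_{i=1}^kq_ix_i-\bar{x}\right\vert\right).
\]
Integrating both sides against $\prod_{i=1}^k\left(p_i\left(x_i\right)\mathrm{d}x_i\right)$ over $\left[a,b\right]^k$, the left-hand side becomes $\mathcal{J}_k\left(f,p_1,\ldots,p_k,\mathbf{q}\right)$ because the product measure is a probability measure and $\bar{x}=\sum_{i=1}^kq_i\int_a^bxp_i\left(x\right)\mathrm{d}x$. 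The key observation is that the first term on the right integrates exactly to $\mathcal{T}_k\left(C,p_1,\ldots,p_k,\mathbf{q}\right)$, since $\sum_{i=1}^kq_i\left(x_i-\int_a^bxp_i\left(x\right)\mathrm{d}x\right)=\sum_{i=1}^kq_ix_i-\bar{x}$.

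Next I would bound the Chebychev term from above by applying the preceding lemma to the modulus function $C$ in place of $f$. Although that lemma is phrased for superquadratic $f$, its proof uses only the two-sided bound on the function together with the vanishing of $\int_{\left[a,b\right]^k}\left(\sum_{i=1}^kq_ix_i-\bar{x}\right)\prod_{i=1}^k\left(p_i\left(x_i\right)\mathrm{d}x_i\right)$, so under the present hypothesis $\tilde{m}\leq C\left(x\right)\leq\tilde{M}$ it gives
\[
\mathcal{T}_k\left(C,p_1,\ldots,p_k,\mathbf{q}\right)\leq\left\vert\mathcal{T}_k\left(C,p_1,\ldots,p_k,\mathbf{q}\right)\right\vert\leq\frac{\tilde{M}-\tilde{m}}{2}\int_{\left[a,b\right]^k}\left\vert\sum_{i=1}^kq_ix_i-\bar{x}\right\vert\prod_{i=1}^k\left(p_i\left(x_i\right)\mathrm{d}x_i\right).
\]

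Substituting this estimate into the connecting inequality and merging the two integrals over $\left[a,b\right]^k$ yields precisely the claimed bound. I expect the only real obstacle to be the derivation of the connecting inequality, and within it the identification of $\int_{\left[a,b\right]^k}C\left(\sum_{i=1}^kq_ix_i\right)\left(\sum_{i=1}^kq_ix_i-\bar{x}\right)\prod_{i=1}^k\left(p_i\left(x_i\right)\mathrm{d}x_i\right)$ with $\mathcal{T}_k\left(C,\ldots\right)$; the remaining steps are routine, and no integrability difficulties arise because everything reduces to the finite quantities already present in the hypotheses.
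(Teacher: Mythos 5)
Your proof is correct and follows essentially the route the paper intends: combine the integral analogue of the connecting inequality (\ref{16}) between $\mathcal{J}_k$ and $\mathcal{T}_k(C,\cdot)$ with the preceding lemma bounding $\left\vert \mathcal{T}_k\right\vert$, exactly as in the discrete Proposition \ref{13}. The only difference is that you derive the connecting inequality yourself from the definition (\ref{SQ}) (correctly, including the observation that only the two-sided bound on $C$ is needed in the Chebychev lemma), whereas the paper cites \cite[Theorem 3]{mitCIA10} for it and leaves the integral transcription implicit.
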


\begin{acknowledgement}
We would like to thank Professor S. S. Dragomir for suggesting the simpler
proof of Lemma \ref{l}.
\end{acknowledgement}

\end{document}